\documentclass[10pt]{article}%
\usepackage{amsmath}
\usepackage{amsfonts}
\usepackage{amssymb}
\usepackage{graphicx}%
\usepackage{mathrsfs}
\usepackage[square, comma, sort&compress, numbers]{natbib}
\usepackage{amssymb, color}
\usepackage[body={15.5cm,21cm}, top=3cm]{geometry}%
\usepackage{hyperref}
\setcounter{MaxMatrixCols}{30}
\providecommand{\U}[1]{\protect \rule{.1in}{.1in}}

\DeclareMathOperator*{\esssup}{ess\,sup}
\newtheorem{theorem}{Theorem}[section]

\newtheorem{definition}[theorem]{{Definition}}

\newtheorem{lemma}[theorem]{Lemma}

\newtheorem{remark}[theorem]{{Remark}}

\newenvironment{proof}[1][Proof]{\noindent \textbf{#1.} }{\  \rule{0.5em}{0.5em}}

\begin{document}

\title{General Mean Reflected BSDEs}
\author{ Ying Hu\thanks{Univ. Rennes, CNRS, IRMAR-UMR 6625, F-35000, Rennes, France. 
		ying.hu@univ-rennes1.fr.
		Research   supported
		by the Lebesgue Center of Mathematics ``Investissements d'avenir"
		program-ANR-11-LABX-0020-01
		.}
	\and Remi Moreau \thanks{Univ. Rennes, CNRS, IRMAR-UMR 6625, F-35000, Rennes, France. remi.moreau@ens-rennes.fr. Research   supported
		by the Lebesgue Center of Mathematics ``Investissements d'avenir"
		program-ANR-11-LABX-0020-01. }
	\and Falei Wang\thanks{Zhongtai Securities Institute for Financial  Studies and School of Mathematics, Shandong University, Jinan 250100, China.
		flwang2011@gmail.com. Research supported by the Natural Science Foundation of Shandong Province for Excellent Youth Scholars (ZR2021YQ01),  the National Natural Science Foundation of China (Nos. 12171280,  12031009, and 11871458) and the Young Scholars Program of Shandong University.}}
\date{}
\maketitle
\begin{abstract}
The present paper is devoted to the study of backward stochastic differential equations with mean reflection formulated by Briand et al. \cite{BH}. We investigate the solvability of a generalized mean reflected BSDE, whose driver also depends on the distribution of the solution term $Y$. Using a fixed-point argument, BMO martingale theory and  the $\theta$-method, we establish the existence and uniqueness result for such BSDEs in several  typical situations, including the case where the driver is quadratic with bounded or unbounded terminal  condition.
\end{abstract}
\medskip
\textbf{Key words}:  mean reflection,  fixed-point method, $\theta$-method

\medskip
\noindent \textbf{MSC-classification}: 60H10, 60H30

\section{Introduction}

Let $(\Omega, \mathscr{F},\mathbf{P})$ be  a given complete probability space  under which $B$ is a $d$-dimensional standard Brownian motion. Suppose $(\mathscr{F}_t)_{0\leq t\leq T}$ is the corresponding natural filtration augmented by the $\mathbf{P}$-null sets and $\mathcal{P}$ is  the  sigma algebra of progressive sets of $\Omega\times [0,T]$. In this paper, we consider the following backward stochastic differential equation (BSDE) with mean reflection:
\begin{align}\label{my1}
\begin{cases}
&Y_t=\xi+\int_t^T f(s,Y_s,\mathbf{P}_{Y_s},Z_s)ds-\int_t^T Z_sdB_s+K_T-K_t, \quad 0\le t\le T,\\
& \mathbf{E}[\ell(t,Y_t)]\geq 0,\quad \forall t\in [0,T] \,\,\mbox{ and }\,\, \int_0^T\mathbf{E}[\ell(t,Y_t)]dK_t = 0,
\end{cases}
\end{align}
where $\mathbf{P}_{Y_t}$ is the marginal probability distribution of the process $Y$ at time $t$,  the terminal condition $\xi$ is a scalar-valued $\mathscr{F}_T$-measurable random variable, the driver $f: \Omega\times [0,T]\times \mathbb{R}\times \mathcal{P}_1(\mathbb{R})\times \mathbb{R}^d\to \mathbb{R}$, and  the running loss function $\ell:\Omega\times[0,T]\times\mathbb{R} \to \mathbb{R}$ are  measurable maps with respect to $\mathcal{P}\times \mathcal{B}(\mathbb{R}) \times \mathcal{B}(\mathcal{P}_1(\mathbb{R})) \times  \mathcal{B}(\mathbb{R}^{d})$ and $\mathscr{F}_T\times\mathcal{B}([0,T])\times\mathcal{B}(\mathbb{R})$ respectively.  Our aim is to prove that the mean reflected BSDE \eqref{my1} admits a unique deterministic solution $(Y,Z,K)$, in the sense that $K$ is a deterministic, non-decreasing, and continuous process starting from the origin.

\medskip

BSDEs  with mean reflection were first introduced by Briand et al. in \cite{BH} to deal with the super-hedging problem under running risk management constraints. When the driver $f$ is independent of the probability distribution of $Y_t$, the authors of \cite{BH} established the existence and uniqueness of the deterministic solution $K$ to the mean reflected BSDE \eqref{my1} based on the following representation 
\begin{align}\label{my10242}
K_t=\sup_{0\leq s\leq T} L_s\bigg(\mathbf{E}_s\bigg[\xi+\int_t^T f(r,Y_r,Z_r)dr\bigg]\bigg)- \sup_{t\leq s\leq T} L_s\bigg(\mathbf{E}_s\bigg[\xi+\int_t^T f(r,Y_r,Z_r)dr\bigg]\bigg),
\ \forall t\leq T,\end{align}
where the map $t\mapsto L_t$ is given by \eqref{my10241} for $t\in[0,T]$. With the help of this  representation result, they were able to construct a contraction mapping when $f$ is uniformly Lipschitz continuous in both variables $Y,Z$. 
For more details on this topic, we refer the reader to \cite{BC1, BC, DES, HMW} and the references therein.

\medskip

In particular, combining BMO martingale theory and a fixed-point method, Hibon et al. \cite{HH1} extended the results from \cite{BH} to the case with bounded terminal condition, when the driver $f$ is allowed to have quadratic growth in the second unknown $z$.
However, in order to estimate the solution $K$ with the representation \eqref{my10242}, they need to assume the following additional condition on the driver:
\[
\text{ $(t,y)\mapsto f(t,y,0)$ is uniformly  bounded,}
\]
which is not necessary for the solvability  of quadratic BSDEs with bounded terminal conditions (see \cite{BE1, K1, zhang2017}).

\medskip

One of our motivations  is to remove this additional
assumption. The key point of our fixed-point method is based on the following representation result:
\begin{align}\label{my10243}
K_t=\sup_{0\leq s\leq T} L_s(y_s)- \sup_{t\leq s\leq T} L_s(y_s), \ \forall t\leq T,
\end{align}
where $y$ denotes the solution to the following BSDE
 \begin{align*}
y_t=\xi+\int^{T}_t f(s,Y_s,\mathbf{P}_{Y_s},z_s)ds-\int^{T}_t z_s dB_s.
\end{align*}
Compared with \eqref{my10242}, the representation result  \eqref{my10243} for the deterministic solution $K$  does not explicitly involve the term $Z$. We can then make use of relevant BSDE techniques to estimate  the solution $K$ and establish existence and uniqueness of the solution to the quadratic mean reflected BSDE \eqref{my1} without this additional assumption.
\medskip

Moreover, this method can also be used to solve BSDEs with mean reflection under weak assumptions on the data. Indeed, with the help of the corresponding BSDE theory and the representation result  \eqref{my10243}, we make a counterpart study
for the case where the driver is Lipschitz and the terminal condition admits a $p$th-order moment. We also tackle the situation with quadratic driver and unbounded terminal condition. 
In the first case, we apply the representation result \eqref{my10243} and  a linearization technique to derive  a priori estimates and build a contraction mapping. 

\medskip

Note that the comparison theorem does not hold for mean reflected BSDEs (see \cite{HH1}). Thus the monotone convergence argument is quite restrictive for quadratic BSDEs with mean reflection, which  differs from the  quadratic BSDEs case, see, e.g., \cite{BE0,BH2006,K1}. Borrowing some ideas from \cite{BH2008, FHT2}, we use the representation result \eqref{my10243} and a $\theta$-method to give a successive approximation procedure when the driver is quadratic and the terminal condition admits exponential moments of arbitrary order. 
\medskip

The main contribution herein is that we introduce a new representation result  to develop the mean reflected BSDEs theory.   In particular, we establish the
well-posedness of equation \eqref{my1} with mean reflection for several typical situations.  Compared to \cite{HH1}, our argument also removes the additional condition in the quadratic case with bounded terminal condition.

\medskip

The paper is organized as follows. In section 2, we start with  the Lipschitz case to illustrate the main idea. Section 3 is devoted to the study of the quadratic case with bounded terminal condition. We remove the additional boundedness condition in Section 4 assuming convexity on the driver.

\subsubsection*{Notations.}
For  each Euclidian space,  we  denote by $\langle\cdot ,\cdot \rangle$  and  $|\cdot|$ its scalar product and the associated norm, respectively.
Then, for each $p\geq 1$, we consider the following collections:

\begin{description}
	\item[$\bullet$] $ \mathcal{L}^{p}$ is the collection of  real-valued $\mathscr{F}_T$-measurable random variables $\xi$ satisfying {\small \[
	\|\xi\|_{\mathcal{L}^{p}}=\mathbf{E}\left[|\xi|^p\right]^{\frac{1}{p}}<\infty;
	\]}
	\item[$\bullet$]  $\mathcal{L}^{\infty}$ is the collection of  real-valued $\mathscr{F}_T$-measurable random variables $\xi$ satisfying 	{\small \[
 \|\xi\|_{\mathcal{L}^{\infty}}=\esssup\limits_{\omega\in\Omega}|\xi(\omega)|<\infty;\]}
	\item[$\bullet$]  $\mathcal{H}^{p,{d}}$  is the collection of $\mathbb{R}^{{d}}$-valued  $\mathscr{F}$-progressively measurable   processes $(z_t)_{0\leq t\leq T}$
	satisfying
	{\small \begin{align*}
	\|z\|_{\mathcal{H}^{p}}=\mathbf{E}\left[\bigg(\int^T_0|z_t|^2dt \bigg)^{\frac{p}{2}}\right]^{\frac{1}{p}}<\infty;
	\end{align*}}

	\item[$\bullet$]  $\mathcal{S}^p$  is the collection of real-valued  $\mathscr{F}$-adapted continuous  processes $(y_t)_{0\leq t\leq T}$ satisfying
{\small\begin{align*}
	\|y\|_{\mathcal{S}^{p}}=\mathbf{E}\bigg[\sup_{t\in[0,T]}|y_t|^p\bigg]^{\frac{1}{p}}<\infty;
	\end{align*}}
\item[$\bullet$]  $\mathcal{S}^{\infty}$  is the collection of  real-valued  $\mathscr{F}$-adapted continuous  processes $(y_t)_{0\leq t\leq T}$ satisfying
{\small \[
\|y\|_{\mathcal{S}^{\infty}}=\esssup\limits_{(t,\omega)\in[0,T]\times\Omega}|y(t,\omega)|<\infty;
\]}
\item[$\bullet$] $\mathcal{P}_p(\mathbb{R})$ is the collection of all probability measures over $(\mathbb{R},\mathcal{B}(\mathbb{R}))$ with finite $p^{th}$ moment, endowed with the $p$-Wasserstein distance $W_p$;
\item[$\bullet$] $ \mathbb{L}^{p}$ is the collection of  real-valued $\mathscr{F}_T$-measurable random variables $\xi$ satisfying $\mathbf{E}\left[e^{p|\xi|}\right]<\infty$;
	\item[$\bullet$]   $\mathbb{S}^p$ is the collection
of all stochastic processes $Y$ such that $e^Y\in \mathcal{S}^p$;
\item[$\bullet$] $\mathbb{L}$ is the collection of all random variables $\xi\in\mathbb{L}^p$ for any $p\geq 1$, and   $\mathcal{H}^d$ and $\mathbb{S}$ are defined similarly;
\item[$\bullet$]  $\mathcal{A}$ is the collection of deterministic, non-decreasing, and continuous processes $(K_t)_{0\leq t\leq T}$ starting from the origin, i.e. $K_0=0$;
\item[$\bullet$] $\mathcal{T}_{t}$ is the collection of $[0,T]$-valued $\mathcal{F}$-stopping times  $\tau$  such that $\tau \geq t$ $\mathbf{P}$-a.s.;
\item[$\bullet$]
 $BMO$  is the collection of    $\mathbb{R}^d$-valued progressively measurable  processes $(z_t)_{0\leq t\leq T}$  such that
\begin{align*}
\|z\|_{BMO}:=\sup\limits_{\tau\in\mathcal{T}_0}\esssup\limits_{\omega\in\Omega}\mathbf{E}_{\tau}\bigg[\int^T_{\tau}|z_s|^2ds\bigg]^{\frac{1}{2}} < \infty.
\end{align*}
\end{description}\smallskip
   We denote by $\ell_{[a,b]}$ the corresponding collections for the stochastic processes {with} time indexes on $[a,b]$ for $\ell=\mathcal{H}^{p,d},\mathcal{S}^p,\mathcal{S}^\infty$ and so on. For each $Z\in BMO$, we set
 \[
  \mathscr{Exp}(Z\cdot B)_0^t=\exp\left(\int^t_0 Z_s dB_s-\frac{1}{2}\int^t_0|Z_s|^2ds\right),
 \]
which is a martingale  by
 \cite{K}. Thus it follows from Girsanov's theorem that  $(B_t-\int_{0}^tZ_sds)_{0\leq t\leq T}$
is a Brownian motion under the equivalent probability measure $\mathscr{Exp} (Z\cdot B)_{0}^T d\mathbf{P}$.

\section{Lipschitz case}
In this section, we study the solvability of the  mean reflected BSDE \eqref{my1} with Lipschitz generator and $p$-integrable terminal condition.

\begin{definition}
By a deterministic solution to \eqref{my1}, we mean a  triple of progressively measurable processes $(Y, Z, K)\in\mathcal{S}^p\times\mathcal{H}^{p,d}\times\mathcal{A}$ such that \eqref{my1} holds  for some $p>1$.
\end{definition}

 \noindent In what follows,  we make use of the following  conditions on the terminal condition $\xi$, the generator $f$ and the running loss function $\ell$.

\begin{description}
\item[(H1)]  There exists $p>1$ such that $\xi \in \mathcal{L}^p$ with $\mathbf{E}[\ell(T,\xi)]\geq 0$.
\item[(H2)] The process $(f(t,0,{\delta_0},0))$ belongs to $\mathcal{H}^{p,1}$ and  there exists a constant $\lambda>0$ such that for any $t\in[0,T]$, $y_1,y_2\in \mathbb{R}$, $v_1,v_2\in \mathcal{P}_1(\mathbb{R})$, and $z_1,z_2\in \mathbb{R}^{d},$
\begin{equation*}
|f(t,y_1,v_1,z_1)-f(t,y_2,v_2,z_2)|\leq \lambda \left( |y_1-y_2|+{W_1(v_1,v_2)}+|z_1-z_2| \right).
\end{equation*}
\item[(H3)]  There exists a constant $L>0$ such that,
\begin{enumerate}
	\item $(t,y)\rightarrow \ell(t,y)$ is continuous,
	\item $\forall t\in[0,T]$, $y\rightarrow\ell(t,y)$ is strictly increasing,
	\item  $\forall t\in[0,T]$, $\mathbf{E}[\ell(t,\infty)]>0$,
	\item $\forall t\in[0,T]$, $\forall y\in\mathbb{R} $, $|\ell(t,y)| \leq L(1+|y|)$.
\end{enumerate}
\item[(H4)]  There exist two constants $\kappa>1$ and $C>0$ such that for each $t\in[0,T]$ and $y_1,y_2\in\mathbb{R}$,
 \begin{equation*}
C|y_1-y_2|\leq |\ell(t,y_1)-\ell(t,y_2)|\leq \kappa C|y_1-y_2|.
\end{equation*}
\end{description}

\medskip

\noindent
In order to study mean reflected BSDEs, we introduce 
the following map $L_t:\mathcal{L}^1\rightarrow\mathbb{R}$ for each $t\in[0,T]$:
\begin{align}\label{my10241}
 L_t(\eta)= \inf\{ x\geq 0 : \mathbf{E}[\ell(t,x+\eta)] \geq 0 \}, \ \forall \eta\in\mathcal{L}^1.
\end{align}
When assumption (H3) is satisfied, the map  
$X\mapsto L_t(X)$ is well-defined, see  \cite{BH}. 
In particular, $L_t(0)$ is continuous in $t$. Moreover, if assumption (H4) is also fulfilled, then for each $t\in[0,T]$,
 \begin{equation}\label{my399}
|L_t(\eta^1)-L_t(\eta^2)|\leq \kappa\mathbf{E}[|\eta^1-\eta^2|], \ \forall  \eta^1,\eta^2 \in \mathcal{L}^1.
\end{equation}

\begin{remark}\label{my1041}{\upshape
  Remark that one can use the map $X\rightarrow L_t(X)$ to construct the term $K$ via a standard BSDE involving the term $Y$,  which is crucial for solving the mean reflected BSDEs, see Lemma \ref{my1046}. 
}
  \end{remark}

\noindent We are now ready to state the main result of this section.
\begin{theorem}\label{my916}
Assume that \emph{(H1)}-\emph{(H4)} are fulfilled.  
Then the quadratic {mean reflected} BSDE \eqref{my1}  admits a unique
deterministic solution $(Y,Z,K)\in \mathcal{S}^p\times \mathcal{H}^{p,d}\times\mathcal{A}$.
\end{theorem}

\begin{remark}{\upshape
Using a fixed-point method, Briand et al. \cite{BH} established the well-posedness of mean field BSDEs \eqref{my1} in the case that $p=2$. Note that the driver furthermore depends on the distribution of the first component $Y$ of the solution in our framework.
}
\end{remark}

\noindent
In order to prove Theorem \ref{my916}, we introduce a representation result for the solution to the problem \eqref{my1}, which plays a key role in establishing the existence and uniqueness result.

\begin{lemma}\label{my1046} Suppose Assumptions \emph{(H1)}-\emph{(H3)} hold.  Let $(Y,Z,K)\in \mathcal{S}^p\times\mathcal{H}^{p,d}\times\mathcal{A}$ be a deterministic solution to the BSDE with mean reflection \eqref{my1}. Then, for each $t\in[0,T]$\[
(Y_t,Z_t,K_t)=\bigg(y_t+\sup\limits_{t\leq s\leq T}L_s(y_s),z_t,\sup_{0\leq s\leq T} L_s(y_s)- \sup_{t\leq s\leq T} L_s(y_s)\bigg)\]
where $(y,z)\in\mathcal{S}^p\times\mathcal{H}^{p,d}$ is the solution to the following BSDE with the driver $f^Y(s,z)=f(s,Y_s,\mathbf{P}_{Y_s},z)$ on the  time horizon $[0,T]$: 
\begin{align}\label{myq1042}
y_t=\xi+\int^{T}_t f^Y(s,z_s)ds-\int^{T}_t z_s dB_s.
\end{align}
\end{lemma}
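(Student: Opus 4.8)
The plan is to decouple the mean reflection from the BSDE dynamics and then to solve a purely deterministic Skorokhod problem. First I would exploit that $K$ is deterministic and of finite variation, so that subtracting it from $Y$ leaves the martingale part untouched. Setting $\bar Y_t := Y_t - (K_T-K_t)$, the dynamics in \eqref{my1} give $\bar Y_t = \xi + \int_t^T f^Y(s,Z_s)\,ds - \int_t^T Z_s\,dB_s$ with terminal value $\bar Y_T = \xi$, so $(\bar Y,Z)$ solves exactly the BSDE \eqref{myq1042}. Under (H2) the frozen driver $f^Y(s,z)=f(s,Y_s,\mathbf{P}_{Y_s},z)$ is Lipschitz in $z$, and since $Y\in\mathcal{S}^p$ the Lipschitz bound together with $W_1(\mathbf{P}_{Y_s},\delta_0)=\mathbf{E}|Y_s|$ shows $f^Y(\cdot,0)=f(\cdot,Y_\cdot,\mathbf{P}_{Y_\cdot},0)\in\mathcal{H}^{p,1}$. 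Hence \eqref{myq1042} is well posed in $\mathcal{S}^p\times\mathcal{H}^{p,d}$ and its unique solution must be $(y,z)$. This yields at once $Z=z$ and the first identity $Y_t=y_t+(K_T-K_t)$.

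Next I would translate the two reflection conditions of \eqref{my1} into conditions on the deterministic, nonincreasing, continuous map $A_t:=K_T-K_t\ge 0$, which satisfies $A_T=0$. Writing $g_t:=L_t(y_t)$, the constraint $\mathbf{E}[\ell(t,Y_t)]=\mathbf{E}[\ell(t,y_t+A_t)]\ge 0$ is, by the strict monotonicity of $\ell(t,\cdot)$ in (H3) and the definition \eqref{my10241} of $L_t$, equivalent to $A_t\ge g_t$ for every $t$. Similarly, because $\mathbf{E}[\ell(t,Y_t)]\ge 0$ and $dK_t\ge 0$, the condition $\int_0^T\mathbf{E}[\ell(t,Y_t)]\,dK_t=0$ forces $\mathbf{E}[\ell(t,Y_t)]=0$ for $dK_t$-a.e. $t$, i.e. again by strict monotonicity the measure $dK=-dA$ charges only the contact set $\{t: A_t=g_t\}$. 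Note also that $g_T=L_T(\xi)=0$ by (H1), so $g_t\ge 0=A_T$ throughout.

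It then remains to solve the deterministic Skorokhod problem: $A$ continuous, nonincreasing, $A_t\ge g_t$, with $A$ decreasing only on the contact set. The lower bound is immediate from monotonicity, since $A_t\ge A_s\ge g_s$ for all $s\in[t,T]$ gives $A_t\ge\sup_{t\le s\le T}g_s$. For the matching upper bound I would argue by contradiction: if $A_{t_0}>m:=\sup_{t_0\le s\le T}g_s$ for some $t_0$, set $\tau=\inf\{s\ge t_0: A_s\le m\}$, which is finite because $A_T=0\le m$. On $[t_0,\tau)$ one has $A_s>m\ge g_s$, so no contact occurs and hence $A$ is constant there; continuity then forces $A_\tau=A_{t_0}>m$, contradicting $A_\tau=m$. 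Therefore $A_t=\sup_{t\le s\le T}L_s(y_s)$, and taking $t=0$ gives $K_T=\sup_{0\le s\le T}L_s(y_s)$. Substituting back into $K_t=K_T-A_t$ and $Y_t=y_t+A_t$ produces the announced formulas.

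I expect the genuine content to lie in the two translation equivalences of the second step and in the flatness argument of the third. The equivalences rest entirely on the strict monotonicity of $\ell$ and on the characterization of $L_t(y_t)$ as the least admissible level, so they are routine once stated carefully. The main obstacle is really the contact-set/flatness argument for the upper bound; I would phrase the contradiction purely through the hitting time $\tau$ as above, which only uses continuity and monotonicity of $K\in\mathcal{A}$ and avoids having to assume that the supremum $m$ is attained or that $g$ is continuous.
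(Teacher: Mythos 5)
Your proposal is correct, and its first half coincides with the paper's own argument: since $K\in\mathcal{A}$ is deterministic, $(Y_\cdot-(K_T-K_\cdot),Z_\cdot)$ is an $\mathcal{S}^p\times\mathcal{H}^{p,d}$-solution of \eqref{myq1042}, and uniqueness for this BSDE (your check that $f^Y$ is $\lambda$-Lipschitz in $z$ with $f^Y(\cdot,0)\in\mathcal{H}^{p,1}$ is precisely what is needed to invoke \cite[Theorem 4.2]{BD}) yields $Z=z$ and $Y_t=y_t+K_T-K_t$. The proofs diverge in the second half. The paper regards $(Y,Z,K)$ as a deterministic solution of the mean reflected BSDE with frozen generator $f(\cdot,Y_\cdot,\mathbf{P}_{Y_\cdot},Z_\cdot)$, observes that $y_t=\mathbf{E}_t\big[\xi+\int_t^T f(s,Y_s,\mathbf{P}_{Y_s},Z_s)ds\big]$, and then simply cites \cite[Proposition 7]{BH} to obtain $K_t=\sup_{0\leq s\leq T}L_s(y_s)-\sup_{t\leq s\leq T}L_s(y_s)$. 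You instead re-derive that representation from scratch: you translate the constraint and the flatness condition into the deterministic problem ``$A_t\geq g_t$ and $-dA$ is carried by the contact set $\{A=g\}$'', and then solve it by the hitting-time argument. Both steps are sound; note only that the equivalence $\mathbf{E}[\ell(t,y_t+A_t)]\geq 0 \Leftrightarrow A_t\geq g_t$ requires, besides strict monotonicity, that the infimum defining $L_t$ is attained, which uses the continuity and linear growth of $\ell$ from (H3) together with $y_t\in\mathcal{L}^1$ (you flag this as ``routine'', and it is, but it should be said). Your route buys a self-contained proof that makes explicit where (H1) (giving $g_T=L_T(\xi)=0$, hence $\sup_{t\leq s\leq T}g_s\geq 0=A_T$) and (H3) enter; the paper's route buys brevity, delegating to \cite{BH} what is essentially the same flat-off-contact mathematics you reconstructed.
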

\begin{proof}
It follows from \cite[Theorem 4.2]{BD} that the BSDE \eqref{myq1042}
admits a unique solution $(y,z)\in\mathcal{S}^p\times\mathcal{H}^{p,d}$. Since $K$ is a deterministic process, $(Y_{\cdot}-(K_T-K_{\cdot}),Z_{\cdot})$ is again a $\mathcal{S}^p\times\mathcal{H}^{p,d}$-solution to the BSDE \eqref{myq1042}, which  implies that
\[
(Y_t,Z_t)=(y_t+K_T-K_t,z_t),\ \forall t\in[0,T].
\]
On the other hand, $(Y,Z,K)\in \mathcal{S}^p\times\mathcal{H}^{p,d}\times\mathcal{A}$ can also be regarded as a deterministic solution to the following mean reflected BSDE with fixed generator $f(\cdot,Y_{\cdot},\mathbf{P}_{Y_{\cdot}},Z_{\cdot})$
\begin{align*}
\begin{cases}
&\widetilde{Y}_t=\xi+\int_t^T f(s,Y_s,\mathbf{P}_{Y_s}, Z_s)ds-\int_t^T \widetilde{Z}_sdB_s+\widetilde{K}_T-\widetilde{K}_t, \quad 0\le t\le T,\\
& \mathbf{E}[\ell(t,\widetilde{Y}_t)]\geq 0,\quad \forall t\in [0,T] \,\,\mbox{ and }\,\, \int_0^T\mathbf{E}[\ell(t,\widetilde{Y}_t)]d\widetilde{K}_t = 0.
\end{cases}
\end{align*}
Note that $y_t=\mathbf{E}_t\big[\xi+\int^{T}_t f(s,Y_s,\mathbf{P}_{Y_s}, Z_s)ds\big]$ for any $t\in[0,T]$.
Thus, recalling \cite[Proposition 7]{BH},  we have  $
	{K}_t= \sup_{0\leq s\leq T} L_s(y_s)- \sup_{t\leq s\leq T} L_s(y_s)
$ for each $t\in[0,T]$. This concludes the proof.
\end{proof}

\medskip
\noindent
Next, we  use a linearization technique and a fixed-point argument to get existence and uniqueness of the solution. For this purpose, we need to introduce the following solution map $\Gamma$ defined for $U\in \mathcal{S}^p$ by $\Gamma(U)=Y$ where $Y$ is the first component of the solution $(Y,Z,K)$ to the following mean reflected BSDE with driver $f^U$:
\begin{align}\label{myq527}
\begin{cases}
&Y_t=\xi+\int_t^T f^U(s,Z_s)ds-\int_t^T Z_sdB_s+K_T-K_t, \quad 0\le t\le T,\\
 & \mathbf{E}[\ell(t,{Y}_t)]\geq 0,\quad \forall t\in [0,T] \,\,\mbox{ and }\,\, \int_0^T\mathbf{E}[\ell(t,{Y}_t)]d{K}_t = 0.
\end{cases}
\end{align}

\begin{lemma}\label{myq1792}
	Assume that \emph{(H1)}--\emph{(H3)} are satisfied and  $U\in\mathcal{S}^{p}$. Then, the mean reflected BSDE
\eqref{myq527} admits a unique solution $(Y,Z,K)\in \mathcal{S}^{p}\times \mathcal{H}^{p,d} \times\mathcal{A}$.
\end{lemma}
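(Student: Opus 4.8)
The plan is to take advantage of the fact that, once $U\in\mathcal{S}^p$ is fixed, the driver $f^U(s,z)=f(s,U_s,\mathbf{P}_{U_s},z)$ in \eqref{myq527} depends neither on the unknown $Y$ nor on its law. Hence \eqref{myq527} is an ordinary mean reflected BSDE of the type solved in \cite{BH}, the only difference being that the terminal datum lies in $\mathcal{L}^p$ rather than $\mathcal{L}^2$. Accordingly I would first solve the associated (non-reflected) BSDE \eqref{myq1042} for $(y,z)$, and then build $(Y,Z,K)$ from $(y,z)$ by means of the representation \eqref{my10243}, exactly as in Lemma \ref{my1046}.

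For the first step I would verify that $f^U$ falls within the scope of the $\mathcal{L}^p$ solvability result \cite[Theorem 4.2]{BD}. Lipschitz continuity in $z$ with constant $\lambda$ is inherited directly from (H2); for the free term, (H2) together with $W_1(\mathbf{P}_{U_s},\delta_0)=\mathbf{E}[|U_s|]$ gives
\begin{align*}
|f^U(s,0)|\le|f(s,0,\delta_0,0)|+\lambda\big(|U_s|+\mathbf{E}[|U_s|]\big),
\end{align*}
which belongs to $\mathcal{H}^{p,1}$ because $(f(s,0,\delta_0,0))\in\mathcal{H}^{p,1}$ and $U\in\mathcal{S}^p$. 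Thus \eqref{myq1042} has a unique solution $(y,z)\in\mathcal{S}^p\times\mathcal{H}^{p,d}$.

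Next I would set, for $t\in[0,T]$,
\begin{align*}
K_t=\sup_{0\le s\le T}L_s(y_s)-\sup_{t\le s\le T}L_s(y_s),\qquad Y_t=y_t+\sup_{t\le s\le T}L_s(y_s),\qquad Z_t=z_t,
\end{align*}
and check that $(Y,Z,K)$ solves \eqref{myq527}. Since $L_s(\eta)$ depends on $\eta$ only through its distribution, each $L_s(y_s)$ is a deterministic number, so $K$ is deterministic; it starts at $0$ and is non-decreasing because $t\mapsto\sup_{t\le s\le T}L_s(y_s)$ is non-increasing. Its continuity reduces to that of $s\mapsto L_s(y_s)$, which follows from the path-continuity of $y$, the continuity in $t$ of $L_\cdot(0)$ granted by (H3), and the continuity properties of $L$ established in \cite{BH}; since $s\mapsto L_s(y_s)$ is then continuous on the compact $[0,T]$, the constant $\sup_{0\le s\le T}L_s(y_s)$ is finite, whence $Y\in\mathcal{S}^p$. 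Moreover (H1) forces $L_T(\xi)=0$, so that $K_T=\sup_{0\le s\le T}L_s(y_s)$ and $Y_T=\xi$; combined with $K_T-K_t=\sup_{t\le s\le T}L_s(y_s)$ this yields the forward dynamics in \eqref{myq527}.

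The step I expect to be the main obstacle is the verification of the two mean reflection conditions. For the sign constraint I would use $Y_t\ge y_t+L_t(y_t)$, the monotonicity of $\ell$ in (H3), and the defining inequality $\mathbf{E}[\ell(t,y_t+L_t(y_t))]\ge0$ to conclude $\mathbf{E}[\ell(t,Y_t)]\ge0$. For the flat-off condition I would note that $K$ increases only at times $t$ at which the running supremum is attained, i.e.\ where $L_t(y_t)=\sup_{t\le s\le T}L_s(y_s)>0$; at such $t$ one has $Y_t=y_t+L_t(y_t)$ and, by the continuity of $\ell$ and the definition of $L_t$ as an infimum, $\mathbf{E}[\ell(t,Y_t)]=0$, so that $\int_0^T\mathbf{E}[\ell(t,Y_t)]\,dK_t=0$. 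Since $y_t=\mathbf{E}_t[\xi+\int_t^T f^U(s,z_s)\,ds]$, this is precisely the construction of \cite[Proposition 7]{BH}, which I would invoke rather than redo. Finally, uniqueness is immediate from Lemma \ref{my1046}: any solution of \eqref{myq527} in $\mathcal{S}^p\times\mathcal{H}^{p,d}\times\mathcal{A}$ must satisfy the displayed representation, and the uniqueness of $(y,z)$ for \eqref{myq1042} then pins down $(Y,Z,K)$.
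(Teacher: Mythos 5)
Your proof is correct and follows essentially the same route as the paper: solve the standard BSDE \eqref{myq1042} in $\mathcal{S}^p\times\mathcal{H}^{p,d}$ via \cite[Theorem 4.2]{BD}, build the reflected solution through the representation underlying \cite[Proposition 7]{BH} (the paper phrases this as solving \eqref{my1049} with fixed generator and then identifying $z\equiv Z$ via Lemma \ref{my1046}), and deduce uniqueness from Lemma \ref{my1046}. The only additions are your explicit verification that $f^U(\cdot,0)\in\mathcal{H}^{p,1}$ and the direct check of the reflection conditions, details which the paper delegates to the cited results.
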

\begin{proof} The uniqueness is immediate from Lemma \ref{my1046}.
It follows from  \cite[Theorem 4.2]{BD} that BSDE \eqref{myq1042}
with the driver $f(\cdot,{U_{\cdot}},\mathbf{P}_{U_{\cdot}},z)$ 
has a unique solution $(y,z)\in\mathcal{S}^p\times\mathcal{H}^{p,d}$. Then in view of \cite[Proposition 7]{BH}, we obtain that the following mean reflected BSDE  with fixed generator $f(\cdot,{U_{\cdot}},\mathbf{P}_{U_{\cdot}},z_{\cdot})$
\begin{align}\label{my1049}
\begin{cases}
&\widetilde{Y}_t=\xi+\int_t^T f(s,U_s,\mathbf{P}_{U_s}, z_s)ds-\int_t^T \widetilde{Z}_sdB_s+\widetilde{K}_T-\widetilde{K}_t, \quad 0\le t\le T,\\
& \mathbf{E}[\ell(t,\widetilde{Y}_t)]\geq 0,\quad \forall t\in [0,T] \,\,\mbox{ and }\,\, \int_0^T\mathbf{E}[\ell(t,\widetilde{Y}_t)]d\widetilde{K}_t = 0
\end{cases}
\end{align} has a unique solution $(Y,Z,K)\in\mathcal{S}^p\times\mathcal{H}^{p,d}\times\mathcal{A}.$ 
In light of the representation result (Lemma \ref{my1046}), we get $z\equiv Z$ and thus $(Y,Z,K)\in\mathcal{S}^p\times\mathcal{H}^{p,d}\times\mathcal{A}$ is the  solution to the mean reflected BSDE \eqref{myq527}.
\end{proof}

\medskip
\noindent
Let us now prove that $\Gamma$ defines a contraction map on a small time interval $[T-h, T]$, in which $h$ is to be determined later.
Note that in the spirit of Lemma \ref{myq1792}, we have  $\Gamma\left(\mathcal{S}^p_{[T-h, T]} \right)\subset \mathcal{S}^p_{[T-h, T]}$ for any $h\in (0,T]$.
\begin{lemma}\label{ref.3.5}
Assume that \emph{(H1)}--\emph{(H4)} hold.  Then there exists a constant $\delta>0$ depending only on $p,\lambda$ and $\kappa$  such that for any $h\in(0,\delta]$, the {mean reflected} BSDE \eqref{my1} admits a unique solution $(Y,Z,K)~\in~ \mathcal{S}^{p}_{[T-h,T]}\times \mathcal{H}^{p,d}_{[T-h,T]}\times\mathcal{A}^p_{[T-h,T]}$ on the time interval $[T-h,T]$.
\end{lemma}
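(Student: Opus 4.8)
The plan is to realise the solution on $[T-h,T]$ as a fixed point of the map $\Gamma$ introduced before Lemma \ref{myq1792}. By Lemma \ref{myq1792} the map $\Gamma$ is well-defined on $\mathcal{S}^p_{[T-h,T]}$, and as noted just above it satisfies $\Gamma\left(\mathcal{S}^p_{[T-h,T]}\right)\subset\mathcal{S}^p_{[T-h,T]}$; since $\mathcal{S}^p_{[T-h,T]}$ is a Banach space, it suffices to show that $\Gamma$ is a strict contraction once $h$ is small enough and then invoke the Banach fixed-point theorem. A fixed point $U=\Gamma(U)$ is exactly the first component $Y$ of a solution $(Y,Z,K)$ to \eqref{my1} on $[T-h,T]$, and conversely Lemma \ref{my1046} forces the $Y$-component of any solution to be a fixed point of $\Gamma$, so existence and uniqueness will follow simultaneously.

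To obtain the contraction estimate, I would fix $U^1,U^2\in\mathcal{S}^p_{[T-h,T]}$ and set $Y^i=\Gamma(U^i)$. The representation of Lemma \ref{my1046} gives
\[
Y^i_t=y^i_t+\sup_{t\leq s\leq T}L_s(y^i_s),\qquad t\in[T-h,T],
\]
where $(y^i,z^i)$ solves the non-reflected BSDE \eqref{myq1042} with driver $f(s,U^i_s,\mathbf{P}_{U^i_s},\cdot)$ on $[T-h,T]$. Writing $\delta y=y^1-y^2$ and combining the elementary inequality $|\sup_s a_s-\sup_s b_s|\leq\sup_s|a_s-b_s|$ with the Lipschitz estimate \eqref{my399} for $L_s$, the supremum term is bounded by the deterministic quantity $\kappa\sup_s\mathbf{E}[|\delta y_s|]\leq\kappa\|\delta y\|_{\mathcal{S}^p_{[T-h,T]}}$. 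Hence everything reduces to the two plain BSDEs through
\[
\|Y^1-Y^2\|_{\mathcal{S}^p_{[T-h,T]}}\leq(1+\kappa)\,\|\delta y\|_{\mathcal{S}^p_{[T-h,T]}}.
\]

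For the BSDE difference I would linearize: writing the driver difference as $\alpha_s\cdot\delta z_s+\gamma_s$, assumption (H2) gives $|\alpha_s|\leq\lambda$, while the source
\[
|\gamma_s|=\big|f(s,U^1_s,\mathbf{P}_{U^1_s},z^2_s)-f(s,U^2_s,\mathbf{P}_{U^2_s},z^2_s)\big|\leq\lambda\big(|\delta U_s|+W_1(\mathbf{P}_{U^1_s},\mathbf{P}_{U^2_s})\big)\leq\lambda\big(|\delta U_s|+\mathbf{E}[|\delta U_s|]\big),
\]
using $W_1(\mathbf{P}_{U^1_s},\mathbf{P}_{U^2_s})\leq\mathbf{E}[|\delta U_s|]$. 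As the terminal conditions coincide, the standard $\mathcal{S}^p$ a priori estimate for Lipschitz BSDEs (with the bounded drift $\alpha$ removed by Girsanov, as in \cite[Theorem 4.2]{BD}) produces a constant $C=C(p,\lambda)$ with $\|\delta y\|_{\mathcal{S}^p_{[T-h,T]}}^p\leq C\,\mathbf{E}\big[(\int_{T-h}^T|\gamma_s|\,ds)^p\big]$. The decisive factor $h$ comes from the short horizon, since $\int_{T-h}^T|\gamma_s|\,ds\leq\lambda h(\sup_s|\delta U_s|+\sup_s\mathbf{E}[|\delta U_s|])$, whence a Jensen step yields $\mathbf{E}[(\int_{T-h}^T|\gamma_s|\,ds)^p]\leq(2\lambda h)^p\|\delta U\|_{\mathcal{S}^p_{[T-h,T]}}^p$. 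Collecting the estimates gives
\[
\|Y^1-Y^2\|_{\mathcal{S}^p_{[T-h,T]}}\leq(1+\kappa)\,C^{1/p}\,2\lambda h\,\|U^1-U^2\|_{\mathcal{S}^p_{[T-h,T]}}.
\]

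Finally I would choose $\delta>0$ so small that $(1+\kappa)\,C^{1/p}\,2\lambda\delta<1$; this $\delta$ depends only on $p,\lambda,\kappa$, as required. Then for every $h\in(0,\delta]$ the map $\Gamma$ is a contraction on the complete space $\mathcal{S}^p_{[T-h,T]}$ and admits a unique fixed point, giving the unique solution $(Y,Z,K)$ on $[T-h,T]$. The step I expect to be the main obstacle is producing the a priori $\mathcal{S}^p$ estimate with a constant $C(p,\lambda)$ that is uniform in $h$ while cleanly isolating the explicit factor $h$ from the source term; by contrast the mean-field dependence and the nonlinear operator $L_s$ are comparatively harmless, being dominated respectively by $W_1\leq\mathbf{E}[|\cdot|]$ and by \eqref{my399}, both controlled by the $\mathcal{S}^p$ norm.
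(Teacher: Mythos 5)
Your proposal is correct and follows essentially the same route as the paper: a fixed point of $\Gamma$ on $\mathcal{S}^p_{[T-h,T]}$, the representation of Lemma \ref{my1046} to pass to the non-reflected BSDE, the bound \eqref{my399} to absorb the $L_s$-term at the cost of a factor $(1+\kappa)$, linearization in $z$ plus Girsanov, and a small-time contraction. The only step you flag as a potential obstacle --- an $\mathcal{S}^p$ a priori estimate with constant uniform in $h$ --- is precisely what the paper proves by hand instead of citing: it writes $y^1_t-y^2_t=\mathbf{E}_t\big[\mathscr{Exp}(\beta\cdot B)_t^T\int_t^T\big(f^{U^1}(s,z^2_s)-f^{U^2}(s,z^2_s)\big)ds\big]$, applies H\"older's inequality with an intermediate exponent $\mu\in(1,p)$ (yielding the explicit factor $\exp\big(\lambda^2h/(2(\mu-1))\big)\lambda h$) and then Doob's maximal inequality at exponent $p/\mu$, and finally restricts to $h\le\mu-1$ so that every constant is controlled by quantities depending only on $p,\lambda,\kappa$ --- which settles your uniformity concern and justifies the claimed dependence of $\delta$.
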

\begin{proof} 
The proof is divided into three steps.
\medskip

\noindent {\bf Step 1 (A priori estimate).}
The main idea is similar to  \cite[Lemma 2.8]{HMW} and we give the sketch of the proof for readers' convenience. Let $U^i\in \mathcal{S}^p_{[T-h,T]}$, for $i=1,2$.
It follows from Lemma \ref{my1046} that
\begin{align}\label{my1001}
\Gamma(U^i)_t:=y^i_t+\sup\limits_{t\leq s\leq T}L_s(y^i_s), \ \ \forall t\in[T-h,T],
\end{align}
where $y^{i}$ is the solution to the  BSDE \eqref{myq1042} with driver $f^{U^i}$ and terminal condition $\xi$.
For each $t\in[0,T]$, we denote \[
\beta_t=\frac{f^{U^1}(t,z^{1}_t)-f^{U^1}(t,z^{2}_t)}{|z^{1}_t-z^{2}_t|^2}(z^{1}_t-z^{2}_t)\mathbf{1}_{\{|z^{1}_t-z^{2}_t|\neq 0\}}.
\]
Then, the pair of processes $(y^{1}-y^{2},z^{1}-z^{2})$ solves the following BSDE:
{ \begin{align*}
y^{1}_t-y^{2}_t = \int^T_t\left(\beta_s(z^{1}_s-z^{2}_s)^{\top}+f^{U^1}(s,z^{2}_s)-f^{U^2}(s,z^{2}_s)\right)ds-\int^T_t(z^{1}_s-z^{2}_s)dB_s.
\end{align*}
}

\noindent Since $\widetilde {B}_t:=B_t-\int_{0}^t\beta^{\top}_sds$ defines a Brownian motion under the equivalent probability measure $\widetilde{\mathbf{P}}$ given by
$d\widetilde{\mathbf{P}}: = \mathscr{Exp} (\beta\cdot B)_{0}^Td\mathbf{P}$, it follows that for every $t\in[0,T]$,
\begin{align*}
y^{1}_t-y^{2}_t=\mathbf{E}_t\bigg[\mathscr{Exp} (\beta\cdot B)_{t}^T\bigg(\int^{T}_t \left(f^{U^1}(s,z^{2}_s)-f^{U^2}(s,z^{2}_s)\right)ds\bigg)\bigg].
\end{align*}
Applying  H\"{o}lder's inequality yields, for any $\mu\in (1,p)$ and any $t\in[T-h,T]$,
\begin{align}\label{myq210}
&|y^{1}_t-y^{2}_t|\leq \exp\bigg(\frac{\lambda^2h}{2(\mu-1)}\bigg)\lambda h \mathbf{E}_t\bigg[\bigg(\sup\limits_{s\in[T-h,T]}|U_s^1-U^2_s|+\sup\limits_{s\in[T-h,T]}\mathbf{E}\left[|U_s^1-U^2_s|\right]\bigg)^\mu\bigg]^{\frac{1}{\mu}}.
\end{align}

\noindent {\bf Step 2 (The contraction).}
Recalling \eqref{my1001} and \eqref{my399}, we have
\begin{align*}\begin{split}
\sup\limits_{s\in[T-h,T]}|\Gamma(U^1)_s-\Gamma(U^2)_s|^p\leq 2^{p-1}\left(\sup\limits_{s\in[T-h,T]}|y^1_s-y^2_s|^p+\kappa^p\sup\limits_{s\in[T-h,T]}\mathbf{E}\left[|y^1_s-y^2_s|^p\right]\right).
\end{split}
\end{align*}
Recalling \eqref{myq210} and
applying {Doob's maximal inequality}, we derive
\begin{align*}
&\mathbf{E}\bigg[\sup\limits_{t\in[T-h,T]}|\Gamma(U^1)_t-\Gamma(U^2)_t|^p\bigg]\leq 2^{p-1}(1+\kappa^p)\lambda^p h^p
\exp\bigg(\frac{p\lambda^2h}{2(\mu-1)}\bigg)\\
&\hspace*{2cm}\times \bigg(\frac{p}{p-\mu}\bigg)^{\frac{p}{\mu}}\mathbf{E}\bigg[\bigg(\sup\limits_{s\in[T-h,T]}|U_s^1-U^2_s|+\sup\limits_{s\in[T-h,T]}\mathbf{E}\left[|U_s^1-U^2_s|\right]\bigg)^p\bigg].
\end{align*}

\noindent Consequently, for any $\mu\in(1,p)$ and $h\in(0,\mu-1]$, we have
 \begin{align*}
\mathbf{E}\bigg[\sup\limits_{t\in[T-h,T]}|\Gamma(U^1)_t-\Gamma(U^2)_t|^p\bigg]^{\frac{1}{p}}\leq \Lambda(\mu)
\mathbf{E}\bigg[\sup\limits_{s\in[T-h,T]}|U^1_s-U^2_s|^p\bigg]^{\frac{1}{p}}
\end{align*}
with
\[
\Lambda(\mu)=4(1+\kappa)\lambda \exp\left(\frac{\lambda^2}{2}\right) \bigg(\frac{p}{p-\mu}\bigg)^{\frac{1}{\mu}} (\mu-1).
\]
 Then we choose  a small enough constant $\mu^*\in(1,p)$ depending only on $p,\lambda$ and $\kappa$ such that $\Lambda (\mu^*) < 1$
 and set ${\delta}:=\mu^*-1.$
It follows that  $\Gamma$ is a contraction map on the time interval $[T-h,T]$ for any $h\in(0,\delta]$. 

\medskip
\noindent {\bf Step 3 (Uniqueness and existence).} 
The uniqueness is immediate from the fact that any solution  to the {mean reflected} BSDE \eqref{my1}  is a fixed point of the map $\Gamma$. For any $h\in(0,\delta]$, the function $\Gamma$ has a unique fixed point $Y\in\mathcal{S}^p_{[T-h,T]}$. Then the mean reflected BSDE \eqref{myq527} with driver $f^Y$ admits a unique solution
$(\widetilde{Y},{Z},{K})\in \mathcal{S}^{p}_{[T-h,T]}\times \mathcal{H}^{p,d}_{[T-h,T]}\times\mathcal{A}_{[T-h,T]}$. It immediately follows that $\widetilde{Y}=\Gamma(Y)=Y$, so $(Y,Z,K)$ is the desired solution to the {mean reflected} BSDE \eqref{my1}  on the time interval $[T-h,T].$  This completes the proof.
\end{proof}

\bigskip\noindent We now prove the main result with the help of the intermediate lemmas above.\medskip

\begin{proof}[Proof of Theorem \ref{my916}] 
Note that the length of the time interval on which the map $\Gamma$ is contractive depends only on $p,\kappa$ and $\lambda$. By a standard BSDE approach, we split the arbitrary time interval $[0,T]$ into a finite number of small time intervals. On each small time interval, we can then apply Lemma \ref{ref.3.5} to get a local solution. A global solution on the whole
time interval is obtained by stitching the local ones.
The global uniqueness on $[0, T]$ follows from the local uniqueness on each small time interval.  The proof is  complete.
\end{proof}

\begin{remark}
{\upshape 
By more involved and delicate estimates, our method can still be applied to study Lipschitz mean reflected BSDEs when the driver depends on the distribution of $Z$ as well. However, it gets much more complicated for the quadratic case, which needs further study (even for quadratic mean-field BSDEs, see, e.g. \cite{HH2}).
}
\end{remark}

\section{Bounded terminal condition}
In this section, we combine BMO martingale theory and a fixed-point argument in order to analyze the quadratic mean reflected BSDE \eqref{my1} with bounded terminal condition. 

\medskip

\noindent In what follows,  we make use of the following  conditions on the terminal condition $\xi$ and the driver $f$.
\begin{description}
\item[(H1')]  The terminal condition $\xi \in \mathcal{L}^{\infty}$ with $\mathbf{E}[\ell(T,\xi)]\geq 0$.
\item[(H2')]  The process $(f(t,0,\delta_0,0))$ is uniformly bounded and there exist two positive  constants $\beta$ and $\gamma$ such that  for any $t\in[0,T]$, $y_1,y_2\in \mathbb{R}$, $v_1,v_2\in \mathcal{P}_1(\mathbb{R})$, and $z_1,z_2\in \mathbb{R}^{d},$
\[
|f(t,y_1,v_1,z_1)-f(t,y_2,v_2,z_2)|\leq \beta \left( |y_1-y_2|+{W_1(v_1,v_2)} \right)+\gamma(1+|z_1|+|z_2))|z_1-z_2|.\]
\end{description}

\noindent We are now ready to state the main result of this section.
\begin{theorem}\label{my1162}
Assume that \emph{(H1')}, \emph{(H2')}, \emph{(H3)} and \emph{(H4)} are satisfied. Then the quadratic  BSDE~\eqref{my1} with mean reflection admits a unique solution $(Y,Z,K)\in \mathcal{S}^{\infty}\times BMO\times\mathcal{A}$.
\end{theorem}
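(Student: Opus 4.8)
The plan is to mirror the proof of Theorem \ref{my916}, replacing the $\mathcal{S}^p\times\mathcal{H}^{p,d}$ framework by the $\mathcal{S}^\infty\times BMO$ framework and invoking the theory of quadratic BSDEs with bounded terminal condition (\cite{BE1, K1, zhang2017}) in place of \cite[Theorem 4.2]{BD}. Concretely, I would fix $U\in\mathcal{S}^\infty_{[T-h,T]}$, freeze the driver to $f^U(s,z)=f(s,U_s,\mathbf{P}_{U_s},z)$, and define the solution map $\Gamma(U)=Y$ through the representation \eqref{my10243}. The whole argument rests on showing that $\Gamma$ is a contraction on $\mathcal{S}^\infty_{[T-h,T]}$ for some threshold $h$ depending only on $\beta$ and $\kappa$, and then on stitching finitely many local solutions. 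The essential gain from the representation \eqref{my10243} is that the contraction constant will involve neither the quadratic coefficient $\gamma$ nor the sup-norm bounds of the data, which is exactly what lets us dispense with the extra boundedness hypothesis used in \cite{HH1}.

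First I would verify that $\Gamma$ is well defined. For fixed $U\in\mathcal{S}^\infty$, the driver $f^U$ is quadratic in $z$, Lipschitz in $y$, and satisfies $|f^U(s,0)|\leq |f(s,0,\delta_0,0)|+\beta(|U_s|+\mathbf{E}[|U_s|])$, which is bounded; hence the frozen quadratic BSDE \eqref{myq1042} admits a unique solution $(y,z)\in\mathcal{S}^\infty\times BMO$ (\cite{K1, zhang2017}). Since $y$ is bounded and $L_s(0)$ is continuous on $[0,T]$, the estimate \eqref{my399} gives $|L_s(y_s)|\leq |L_s(0)|+\kappa\mathbf{E}[|y_s|]<\infty$, so $Y_t:=y_t+\sup_{t\leq s\leq T}L_s(y_s)\in\mathcal{S}^\infty$. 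The analogue of Lemma \ref{my1046} then holds verbatim: since $K$ is deterministic, $(Y_\cdot-(K_T-K_\cdot),Z)$ solves the frozen quadratic BSDE, uniqueness of its bounded solution identifies $(Y,Z)=(y+K_T-K_\cdot,z)$, and \cite[Proposition 7]{BH} yields $K_t=\sup_{0\leq s\leq T}L_s(y_s)-\sup_{t\leq s\leq T}L_s(y_s)$.

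The core step is the contraction. Writing $(y^i,z^i)$ for the frozen solutions associated with $U^i$, I would linearize the quadratic driver in $z$, obtaining $f^{U^1}(s,z^1_s)-f^{U^1}(s,z^2_s)=\beta_s(z^1_s-z^2_s)^\top$ with $|\beta_s|\leq\gamma(1+|z^1_s|+|z^2_s|)$, so that $\beta\in BMO$ because $z^1,z^2\in BMO$. By Kazamaki's criterion (\cite{K}), $\mathscr{Exp}(\beta\cdot B)$ is a true martingale, and as in Step 1 of Lemma \ref{ref.3.5} one gets $y^1_t-y^2_t=\mathbf{E}_t\big[\mathscr{Exp}(\beta\cdot B)_t^T\int_t^T(f^{U^1}(s,z^2_s)-f^{U^2}(s,z^2_s))\,ds\big]$. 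Since $|f^{U^1}(s,z^2_s)-f^{U^2}(s,z^2_s)|\leq\beta(|U^1_s-U^2_s|+\mathbf{E}[|U^1_s-U^2_s|])\leq 2\beta\|U^1-U^2\|_{\mathcal{S}^\infty_{[T-h,T]}}$ is bounded by a deterministic constant and $\mathbf{E}_t[\mathscr{Exp}(\beta\cdot B)_t^T]=1$, the change of measure is harmless and $\|y^1-y^2\|_{\mathcal{S}^\infty_{[T-h,T]}}\leq 2\beta h\,\|U^1-U^2\|_{\mathcal{S}^\infty_{[T-h,T]}}$. Combining with \eqref{my399} exactly as in \eqref{my1001} gives $\|\Gamma(U^1)-\Gamma(U^2)\|_{\mathcal{S}^\infty_{[T-h,T]}}\leq 2\beta(1+\kappa)h\,\|U^1-U^2\|_{\mathcal{S}^\infty_{[T-h,T]}}$, and choosing $h$ with $2\beta(1+\kappa)h<1$ yields the contraction, with threshold depending only on $\beta$ and $\kappa$.

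Finally, the unique fixed point $Y$ produces a local solution $(Y,Z,K)$ on $[T-h,T]$ as in Step 3 of Lemma \ref{ref.3.5}. Because $h$ is uniform, I would split $[0,T]$ into finitely many intervals of length at most $h$ and stitch: on $[T-2h,T-h]$ the new terminal condition is $Y_{T-h}\in\mathcal{L}^\infty$, and the constraint $\mathbf{E}[\ell(T-h,Y_{T-h})]\geq 0$ — which is precisely \emph{(H1')} for the next step — holds because the local solution satisfies $\mathbf{E}[\ell(t,Y_t)]\geq 0$ for all $t$. Concatenating the finitely many $BMO$ pieces of $Z$ gives $Z\in BMO$ on $[0,T]$, and global uniqueness follows from local uniqueness. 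I expect the main obstacle to be the $BMO$ bookkeeping: one must check that $\beta\in BMO$ so that Girsanov is legitimate, and, crucially, that the $\mathcal{S}^\infty$-estimate absorbs the measure change so the contraction constant remains free of $\gamma$ and of the solution bounds — this is the exact mechanism through which the representation \eqref{my10243} eliminates the superfluous hypothesis of \cite{HH1}.
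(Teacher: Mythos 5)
Your proposal is correct and follows essentially the same route as the paper: well-posedness of the frozen mean-reflected BSDE via the quadratic BSDE theory and the representation \eqref{my10243} (the paper's Lemma \ref{myq792} and Remark \ref{my1996}), then the linearization with $(\beta_t)\in BMO$ and Girsanov to get $\|y^1-y^2\|_{\mathcal{S}^{\infty}_{[T-h,T]}}\leq 2\beta h\|U^1-U^2\|_{\mathcal{S}^{\infty}_{[T-h,T]}}$, the contraction constant $2(1+\kappa)\beta h<1$ depending only on $\beta$ and $\kappa$, and stitching of local solutions. The only cosmetic difference is that you invoke \cite[Proposition 7]{BH} directly for the frozen equation where the paper cites \cite[Theorem 3.1]{HH1}, which does not change the argument.
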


\begin{remark}\label{my1996}{\upshape In view of \cite[Theorem 7.3.3]{zhang2017} and \cite[Theorem 3.1]{HH1},  Lemma \ref{my1046} still holds under conditions {(H1')}, {(H2')} and 
{(H3)}. 
When the driver does not depend on the
 distribution of $Y$, the authors of \cite{HH1} proved that quadratic mean reflected BSDEs  admits a unique solution. Compared with that of \cite{HH1}, we apply Lemma \ref{my1046} to remove the following additional assumption:
\[
\text{ $(t,y)\mapsto f(t,y,0)$ is uniformly  bounded.}
\]
}
\end{remark}

\medskip  
\noindent 
As in the Lipschitz case, we will prove that the solution map $\Gamma$ defines a contraction map.

\begin{lemma}\label{myq792}
	Assume that \emph{(H1')}, \emph{(H2')} and \emph{(H3)}  are satisfied and  $U\in\mathcal{S}^{\infty}$. Then, the quadratic  BSDE
\eqref{myq527} with mean reflection, with driver $f^U$, admits a unique solution $(Y,Z,K)\in \mathcal{S}^{\infty}\times BMO \times\mathcal{A}$.
\end{lemma}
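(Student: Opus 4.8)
The plan is to mirror the structure of Lemma \ref{myq1792} from the Lipschitz case, replacing the linear BSDE theory with the quadratic BSDE theory for bounded terminal conditions (as recorded in Remark \ref{my1996}, where Lemma \ref{my1046} is asserted to remain valid under (H1'), (H2'), (H3)). First I would freeze the process $U\in\mathcal{S}^\infty$ and consider the driver $f^U(s,z)=f(s,U_s,\mathbf{P}_{U_s},z)$. Under (H2'), the map $(s,z)\mapsto f^U(s,z)$ has quadratic growth in $z$; moreover, since $U\in\mathcal{S}^\infty$ and $(f(t,0,\delta_0,0))$ is uniformly bounded, the process $f^U(s,0)=f(s,U_s,\mathbf{P}_{U_s},0)$ is uniformly bounded (using the Lipschitz bound $\beta(|U_s|+W_1(\mathbf{P}_{U_s},\delta_0))$ together with $\|U\|_{\mathcal{S}^\infty}<\infty$). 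Hence the standard quadratic BSDE theory with bounded terminal condition applies: I would invoke \cite{K1} (or \cite[Theorem 7.3.3]{zhang2017}) to obtain a unique solution $(y,z)\in\mathcal{S}^\infty\times BMO$ to the non-reflected BSDE \eqref{myq1042} with driver $f^U$.

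Next I would construct the reflection. With $(y,z)$ in hand, define the candidate via the representation of Lemma \ref{my1046}:
\begin{align*}
Y_t=y_t+\sup_{t\le s\le T}L_s(y_s),\quad Z_t=z_t,\quad K_t=\sup_{0\le s\le T}L_s(y_s)-\sup_{t\le s\le T}L_s(y_s).
\end{align*}
To justify that this triple genuinely solves the mean reflected BSDE \eqref{myq527}, I would appeal to \cite[Proposition 7]{BH}, exactly as in the Lipschitz proof: since $y_t=\mathbf{E}_t[\xi+\int_t^T f^U(s,z_s)ds]$, the running-loss constraint $\mathbf{E}[\ell(t,Y_t)]\ge 0$ with the Skorokhod flatness condition $\int_0^T\mathbf{E}[\ell(t,Y_t)]dK_t=0$ is precisely encoded by this formula for $K$. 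The continuity and monotonicity of $t\mapsto K_t$, together with $K_0=0$, give $K\in\mathcal{A}$; here I would use that $t\mapsto L_t(0)$ is continuous (stated after \eqref{my10241}) and that (H4) gives the Lipschitz bound \eqref{my399} on $L_t$, so $s\mapsto L_s(y_s)$ is continuous and the two suprema are continuous in $t$. Membership $Y\in\mathcal{S}^\infty$ follows since $y\in\mathcal{S}^\infty$ and the deterministic shift $\sup_{t\le s\le T}L_s(y_s)$ is bounded (again by \eqref{my399} and boundedness of $y$), while $Z=z\in BMO$ is inherited directly.

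Finally, uniqueness is the cleanest part: any solution $(Y,Z,K)\in\mathcal{S}^\infty\times BMO\times\mathcal{A}$ to \eqref{myq527} must, by Lemma \ref{my1046} (valid here per Remark \ref{my1996}), coincide with the triple built from the unique $(y,z)$, so $K$ and hence $(Y,Z)$ are determined. I expect the main obstacle to be verifying that Lemma \ref{my1046} and \cite[Proposition 7]{BH} transfer cleanly to the quadratic/bounded setting: the representation \eqref{my10243} was originally derived in an $\mathcal{L}^p$ framework, and one must check that the conditional-expectation identity $y_t=\mathbf{E}_t[\xi+\int_t^T f^U(s,z_s)ds]$ and the $BMO$ integrability of $z$ are compatible with the hypotheses of \cite[Proposition 7]{BH}. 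Remark \ref{my1996} asserts this compatibility, citing \cite[Theorem 7.3.3]{zhang2017} and \cite[Theorem 3.1]{HH1}, so the careful point is simply to confirm that the quadratic solution $(y,z)$ satisfies the integrability needed for those propositions rather than to re-prove them.
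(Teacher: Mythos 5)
Your proposal is correct and follows essentially the same route as the paper: freeze $U$, solve the unreflected quadratic BSDE via \cite[Theorem 7.3.3]{zhang2017} to get $(y,z)\in\mathcal{S}^\infty\times BMO$, then obtain the reflected solution from a fixed-generator result and identify $z\equiv Z$ through Lemma \ref{my1046} (valid here by Remark \ref{my1996}), with uniqueness again from Lemma \ref{my1046}. The one structural difference is the citation used for the reflection step: the paper invokes \cite[Theorem 3.1]{HH1}, which is stated in the bounded/quadratic framework and directly returns a solution of \eqref{my1049} in $\mathcal{S}^{\infty}\times BMO\times\mathcal{A}$, whereas you verify the candidate through \cite[Proposition 7]{BH}, which is an $L^p$-framework result. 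Your route is legitimate, but then the compatibility check you flag at the end is genuinely needed: one must observe that $z\in BMO$ implies (by the John--Nirenberg/energy inequalities) that $\int_0^T|f^U(s,z_s)|\,ds$ has moments of all orders, so the frozen driver is admissible for \cite[Proposition 7]{BH}; the upgrade $Y\in\mathcal{S}^\infty$ then follows as you say, since $K_T-K_\cdot$ is deterministic and continuous, hence bounded. One blemish: your continuity/boundedness argument for $s\mapsto L_s(y_s)$ invokes (H4) and the estimate \eqref{my399}, but (H4) is \emph{not} among the hypotheses of this lemma (it is only assumed later, for the fixed-point contraction). This is not a fatal gap, because the continuity and finiteness of $t\mapsto\sup_{t\le s\le T}L_s(y_s)$ --- hence $K\in\mathcal{A}$ --- is already part of the conclusion of \cite[Proposition 7]{BH} (equivalently \cite[Theorem 3.1]{HH1}) under (H3) alone; you should drop the appeal to (H4) and lean on the cited proposition instead.
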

\begin{proof}  
It follows from  \cite[Theorem 7.3.3]{zhang2017} that the quadratic BSDE \eqref{myq1042}
with the driver $f(\cdot,{U_{\cdot}},\mathbf{P}_{U_{\cdot}},z)$ 
has a unique solution $(y,z)\in\mathcal{S}^{\infty}\times BMO$. Then with the help of \cite[Theorem 3.1]{HH1}, we have that  the quadratic mean reflected BSDE \eqref{my1049} with the fixed generator $f(\cdot,{U_{\cdot}},\mathbf{P}_{U_{\cdot}},z_{\cdot})$ has a unique solution $(Y,Z,K)\in\mathcal{S}^{\infty}\times BMO\times\mathcal{A}.$  Recalling Remark \ref{my1996} and Lemma \ref{my1046}, we derive that  $z\equiv Z$ and  $(Y,Z,K)\in\mathcal{S}^{\infty}\times BMO\times\mathcal{A}$ is the  solution to the mean reflected BSDE \eqref{myq527}. The uniqueness eventually follows from  Lemma \ref{my1046}, which ends the proof.
\end{proof}

\medskip
\noindent We are now ready to state the proof of the main result of this section.\medskip

\begin{proof}[Proof of Theorem \ref{my1162}]
Let $U^i\in \mathcal{S}^{\infty}$, $i=1,2$.
It follows from Lemma \ref{my1046} that
\begin{align}\label{my1002}
\Gamma(U^i)_t:=y^i_t+\sup\limits_{t\leq s\leq T}L_s(y^i_s), \ \ \forall t\in[T-h,T],
\end{align}
where $y^{i}$ is the solution to the quadratic BSDE \eqref{myq1042} with driver $f^{U^i}$ and the terminal condition $\xi$. Following the proof of Lemma \ref{ref.3.5} step by step (noting that $(\beta_t) \in BMO$ in this case), we conclude that for any $t\in[0,T]$,
{ \begin{align*}
y^{1}_t-y^{2}_t=\mathbf{E}^{\widetilde{\mathbf{P}}}_t\bigg[\int^{T}_t \left(f^{U^1}(s,z^{2}_s)-f^{U^2}(s,z^{2}_s)\right)ds\bigg],
\end{align*}}
which together with Assumption (H2')  implies that for any $t\in[T-h,T]$,
 \begin{align*}
|y^{1}_t-y^{2}_t|\leq \beta h\|U^1-U^2\|_{\mathcal{S}^{\infty}_{[T-h,T]}}+\beta h\sup\limits_{s\in[T-h,T]}\mathbf{E}[|U_s^1-U^2_s|].
\end{align*}
In view of \eqref{my1002} and \eqref{my399}, we again derive that
 \begin{align*}\begin{split}
\|\Gamma(U^1)-\Gamma(U^2)\|_{\mathcal{S}^{\infty}_{[T-h,T]}}
\leq 2(1+\kappa)\beta h \|U^1-U^2\|_{\mathcal{S}^{\infty}_{[T-h,T]}}.
\end{split}
\end{align*}
Then we can find a small enough constant $h$ depending only on $\beta$ and $\kappa$ such that $2(1+\kappa)\beta h< 1.$ Therefore, $\Gamma$ defines a contraction map on the time interval $[T-h,T]$.
Proceeding exactly as in Theorem \ref{my916} and Lemma \ref{ref.3.5}, we complete the proof.
\end{proof}

\medskip
\noindent
Note that the process $(\beta_t)$ may be unbounded in the BMO space  and then  the fixed-point argument fails to work when the terminal condition is  unbounded.  In the next section, we make use of a $\theta$-method to overcome this difficulty under the further assumption of either convexity or concavity on the generator.

\section{Unbounded terminal condition}

In this section, we investigate the solvability of the  mean reflected BSDE \eqref{my1} with quadratic generator $f$ and unbounded terminal value $\xi$. In what follows,  we make use of the following conditions on the parameters $\xi$ and $f$.

\begin{description}
\item[(H1'')]  The terminal condition $\xi \in \mathbb{L}$ with $\mathbf{E}[\ell(T,\xi)]\geq 0$.
\item[(H2'')]  There exist a positive progressively measurable process $(\alpha_{t})_{0\leq t\leq T}$ with   $\int^T_0\alpha_tdt\in\mathbb{L}$  and two positive  constants $\beta$ and $\gamma$ such that 
\begin{enumerate}
	\item $\forall (t,y,v,z)\in[0,T]\times \mathbb{R}\times \mathcal{P}_1(\mathbb{R}) \times\mathbb{R}^{d}$,
$|f(t,y,v,z)|\leq \alpha_t+\beta (|y|+W_1(v,\delta_0))+\frac{\gamma}{2}|z|^2,
$
\item  $\forall t\in[0,T]$, $y_1,y_2\in \mathbb{R}$, $v_1,v_2\in \mathcal{P}_1(\mathbb{R})$, $ z\in \mathbb{R}^{d}$,
$$
|f(t,y_1,v_1,z)-f(t,y_2,v_2,z)|\leq \beta (|y_1-y_2|+W_1(v_1,v_2)),$$
\item  $\forall (t,y,v)\in[0,T]\times \mathbb{R}\times \mathcal{P}_1(\mathbb{R})$, $z\rightarrow f(t,y,v,z)$ is convex or concave.
\end{enumerate}
\end{description}

\noindent Let us now state the main result of this section.
\begin{theorem}\label{my116}
Assume that \emph{(H1'')}, \emph{(H2'')}, \emph{(H3)} and \emph{(H4)} are fulfilled.  
Then the quadratic {mean reflected} BSDE \eqref{my1}  admits a unique
deterministic solution $(Y,Z,K)\in \mathbb{S}\times \mathcal{H}^{d}\times\mathcal{A}$.
\end{theorem}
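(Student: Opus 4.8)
The plan is to follow the architecture of the Lipschitz and bounded cases from Theorems \ref{my916} and \ref{my1162} --- freeze the dependence on $(Y,\mathbf{P}_Y)$, solve the resulting frozen mean reflected quadratic BSDE via the representation result, and iterate the solution map $\Gamma$ on short intervals --- but replace the Girsanov/$BMO$ linearization, which fails here because the linearized $z$-coefficient need not lie in $BMO$, by the $\theta$-technique of \cite{BH2008, FHT2}. First I would set up $\Gamma:\mathbb{S}\to\mathbb{S}$ as in Lemma \ref{myq792}: for $U\in\mathbb{S}$ the frozen driver $f^U(s,z)=f(s,U_s,\mathbf{P}_{U_s},z)$ is convex or concave in $z$ by (H2'')(3), has quadratic growth $|f^U(s,z)|\le \alpha_s+\beta(|U_s|+\mathbf{E}[|U_s|])+\tfrac{\gamma}{2}|z|^2$ by (H2'')(1), and its zero-order term $\alpha_s+\beta(|U_s|+\mathbf{E}[|U_s|])$ has exponential moments of every order since $U\in\mathbb{S}$ and $\int_0^T\alpha_t\,dt\in\mathbb{L}$; hence the unreflected BSDE \eqref{myq1042} has a unique solution $(y,z)\in\mathbb{S}\times\mathcal{H}^d$ by the theory of quadratic BSDEs with convex generator and unbounded terminal condition. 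I would then invoke Lemma \ref{my1046}, after checking once (as foreshadowed in Remark \ref{my1996}) that its conclusion persists under (H1''),(H2''),(H3), to set $K_t=\sup_{0\le s\le T}L_s(y_s)-\sup_{t\le s\le T}L_s(y_s)$, $Y_t=y_t+\sup_{t\le s\le T}L_s(y_s)$ and $Z\equiv z$, giving $\Gamma(U)=Y$. The linear growth of $\ell$ in (H3)(4) bounds $\sup_sL_s(y_s)$ by $\sup_s|y_s|$ up to constants, so $Y\in\mathbb{S}$ and $\Gamma$ maps $\mathbb{S}$ into itself.

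Next I would show that the successive approximation $Y^{n+1}=\Gamma(Y^n)$ converges, and this is where the $\theta$-method does the work. Given $U^1,U^2\in\mathbb{S}$ with associated $(y^i,z^i)$, fix $\theta\in(0,1)$ and study $\delta^\theta y:=y^1-\theta y^2$, $\delta^\theta z:=z^1-\theta z^2$, which solves a BSDE with terminal value $(1-\theta)\xi$. Writing $z^1=\theta z^2+(1-\theta)\tfrac{\delta^\theta z}{1-\theta}$ and using convexity gives
\begin{align*}
f^{U^1}(s,z^1)-\theta f^{U^2}(s,z^2)\le \theta\big(f^{U^1}(s,z^2)-f^{U^2}(s,z^2)\big)+(1-\theta)\,f^{U^1}\!\Big(s,\tfrac{\delta^\theta z}{1-\theta}\Big),
\end{align*}
where the first term is controlled by $\beta\big(|U^1_s-U^2_s|+\mathbf{E}[|U^1_s-U^2_s|]\big)$ via (H2'')(2) and $W_1\le\mathbf{E}[|\cdot|]$, while the second is dominated by an integrable zero-order term plus $\tfrac{\gamma}{2(1-\theta)}|\delta^\theta z|^2$ through the quadratic growth (H2'')(1). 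This exhibits $\delta^\theta y$ as a subsolution of a scalar quadratic BSDE, so the a priori estimates for such equations (using the exponential moments secured above) bound $\sup_t(\delta^\theta y_t)^+$; a symmetric computation on $\theta y^1-y^2$ controls the negative part, and letting $\theta\uparrow1$ yields an estimate of the form $\|y^1-y^2\|\lesssim \beta h\,\|U^1-U^2\|$ on a short interval $[T-h,T]$. Combining this with \eqref{my399} to pass from $y^i$ to $Y^i=\Gamma(U^i)$, I expect $\Gamma$ to contract on $[T-h,T]$ for $h$ small, which produces a local solution; the global solution then follows by stitching exactly as in Theorem \ref{my916}, and uniqueness follows from the same $\theta$-estimates applied to two genuine solutions (the lack of a comparison theorem is precisely what the $\theta$-argument circumvents).

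The hard part will be making the $\theta$-step uniform. Two points need care. First, the a priori bounds for the $\theta$-difference equation must stay controlled as $\theta\uparrow1$, which forces one to track the exponential-moment estimates carefully so that the blowing-up coefficient $\tfrac{\gamma}{2(1-\theta)}$ is absorbed by the exponential integrability of $\xi$ and $\int_0^T\alpha_t\,dt$ rather than spoiling the final constant. Second, to realize the limit of $Y^n$ as an honest solution one must upgrade the convergence of $y^n$ in $\mathbb{S}$ to convergence of $z^n$ in $\mathcal{H}^d$ strong enough to pass to the limit in the quadratic generator, which requires a localization and uniform-integrability argument exploiting that the $Y^n$ are bounded in $\mathbb{S}$ with uniform exponential moments. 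By contrast, the reflection itself adds no difficulty to the convexity computation: since $K$ is deterministic and enters only through the $L_s$-functional of $y$, the shift $Y-y$ passes transparently through $z\mapsto f(\cdot,\cdot,\cdot,z)$, so the entire obstacle is concentrated in the uniformity of the $\theta$-estimates and the $\mathcal{H}^d$ passage to the limit.
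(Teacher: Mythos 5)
Your setup is sound and matches the paper's: freezing $(Y,\mathbf{P}_Y)$ and solving the frozen mean reflected equation via \cite[Corollary 6]{BH2008}, \cite[Proposition 7]{BH} and the representation of Lemma \ref{my1046} is exactly the paper's Lemma \ref{myq79}, and your claim that uniqueness follows from $\theta$-estimates applied to two genuine solutions is also correct and is the paper's Lemma \ref{myq7900} (for two solutions of the \emph{same} equation the driver-difference term is controlled by the $\theta$-difference $\delta_\theta Y$ itself, so the estimate closes up self-referentially on a small interval). The gap is in your existence argument: the claim that letting $\theta\uparrow 1$ in the convexity computation yields a contraction estimate $\|y^1-y^2\|\lesssim \beta h\,\|U^1-U^2\|$, from which Banach's fixed point theorem gives a local solution. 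This step fails, and for a structural reason. In the BSDE satisfied by $\delta^\theta y = y^1-\theta y^2$, the terminal value $(1-\theta)\xi$ and the zero-order terms $(1-\theta)\alpha_s$, $(1-\theta)\beta(|U^1_s|+\mathbf{E}[|U^1_s|])$ all carry the factor $(1-\theta)$ needed to compensate the exploding quadratic coefficient $\frac{\gamma}{2(1-\theta)}$, but the driver-difference term $\theta\beta\left(|U^1_s-U^2_s|+\mathbf{E}[|U^1_s-U^2_s|]\right)$ does \emph{not}: it is of order one for generic inputs $U^1,U^2$. After applying the exponential a priori estimate (Lemma \ref{my7} with quadratic coefficient $\gamma/(1-\theta)$), this term therefore sits inside the exponential multiplied by $\frac{1}{1-\theta}$. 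What comes out is an entropic bound of the form $(y^1_t-y^2_t)^+\leq (1-\theta)|y^2_t|+\frac{1-\theta}{\gamma}\log \mathbf{E}_t\left[\exp\left\{\gamma(\cdots)+\frac{\gamma\beta}{1-\theta}\int_t^T\left(|U^1_s-U^2_s|+\mathbf{E}[|U^1_s-U^2_s|]\right)ds\right\}\right]$, and since $\log\mathbf{E}[e^{\,\cdot\,}]$ is not positively homogeneous you cannot factor out any norm of $U^1-U^2$; as $\theta\uparrow1$ the scaled cumulant term degenerates to an essential supremum, i.e.\ at best an $\mathcal{S}^\infty$-type contraction as in Section 3 — which is vacuous here because with $\xi$ unbounded the iterates $Y^{(m)}$, and their differences, are unbounded. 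This is precisely the obstruction (noted at the end of Section 3) that rules out a fixed-point/contraction argument in the unbounded case.

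The paper's existence proof is built to avoid exactly this: it never shows $\Gamma$ is a contraction. Instead it runs the Picard iteration \eqref{myq78} and applies the $\theta$-method to \emph{pairs of iterates}, defining $\delta_\theta Y^{(m,q)}=\frac{\theta Y^{(m+q)}-Y^{(m)}}{1-\theta}$. The crucial structural point is that the driver-difference term for the $(m,q)$-pair is then controlled by $\beta\big(|Y^{(m+q-1)}_t|+|\delta_\theta Y^{(m-1,q)}_t|+\mathbf{E}[\cdots]\big)$, i.e.\ by the $\theta$-difference of the \emph{previous} iterate pair, which already carries the $\frac{1}{1-\theta}$ normalization. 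This produces a recursion in which the exponential moment of $\sup_s\delta_\theta\overline{Y}^{(m,q)}_s$ is bounded by data terms times the corresponding moment for the $(m-1,q)$-pair raised to the power $(16+32\kappa)\beta h<1$; iterating $m$ times drives that exponent to zero, yielding the uniform-in-$(\theta,q)$ bound $\Pi(p)<\infty$ of Lemma \ref{myq7902} (with Lemma \ref{myq7901} supplying the uniform exponential moments of the iterates). Only \emph{after} this uniform bound is in hand does one write $Y^{(m+q)}-Y^{(m)}=(1-\theta)(\delta_\theta Y^{(m,q)}+Y^{(m+q)})$ and let $\theta\to1$ to conclude the iterates are Cauchy in every $\mathcal{S}^p$; the $Z$-convergence and identification of the limit then go roughly as you sketch. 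So your proposal would need to be reorganized along these lines: replace the contraction claim by the recursive propagation of $\theta$-difference exponential-moment bounds across consecutive Picard iterates.
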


\begin{remark}\label{my3996}{\upshape Note that  in view of \cite[Corollary 6]{BH2008} and \cite[Proposition 7]{BH}, the representation result in Lemma \ref{my1046} still holds under {(H1'')}, {(H2'')} and 
{(H3)}. 
}
\end{remark}

\noindent In order to prove Theorem \ref{my116}, we need to recall some technical results on quadratic BSDEs. Consider the following  standard BSDE on the  time horizon $[0,T]$
\begin{align}\label{myq2}
y_t=\eta+\int^{T}_t g(s,z_s)ds-\int^{T}_t z_s dB_s.
\end{align}

\noindent The following result is important for our subsequent computations, and can be found in \cite[Lemmas A3 and A4]{FHT2}.
\begin{lemma}\label{my7}
Assume that $(y,z)\in \mathcal{S}^2\times\mathcal{H}^{2,d}$  is a solution to \eqref{myq2}. Suppose that there is a  constant $p\geq 1$ such that
\begin{align*}
\mathbf{E}\bigg[\exp\bigg\{2p\gamma \sup\limits_{t\in[0,T]}|y_t|+2p\gamma\int^T_0\alpha_tdt\bigg\}\bigg]<\infty.
\end{align*}
Then, we have
	\begin{description}
		\item[(i)] If $|g(t,z)|\leq \alpha_t+\frac{\gamma}{2}|z|^2$, then  for each $t\in[0,T]$,
\begin{align*}\exp\left\{p\gamma|y_t|\right\}\leq   \mathbf{E}_t\bigg[\exp\bigg\{p\gamma|\eta|+p\gamma\int^T_t\alpha_s ds\bigg\}	 \bigg].
		\end{align*}
				\item[(ii)] If $g(t,z)\leq \alpha_t+\frac{\gamma}{2}|z|^2$, then for each $t\in[0,T]$,
 \begin{align*}
		\exp\left\{{p\gamma}(y_t)^+\right\}\leq   \mathbf{E}_t\bigg[\exp\bigg\{p\gamma \eta^++p\gamma\int^T_t\alpha_s ds\bigg\}
		\bigg].
		\end{align*}	
	\end{description}
\end{lemma}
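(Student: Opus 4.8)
The plan is to establish (ii) by an exponential change of variables and then to deduce (i) from (ii) by symmetry. For (ii) the goal is to show that the process
\[
W_t:=\exp\Big\{p\gamma (y_t)^+ +p\gamma\int_0^t\alpha_s\,ds\Big\}
\]
is a submartingale on $[0,T]$. Once this is known, the submartingale inequality $W_t\le\mathbf{E}_t[W_T]$, together with $W_T=\exp\{p\gamma\eta^++p\gamma\int_0^T\alpha_s\,ds\}$ and the fact that $\exp\{p\gamma\int_0^t\alpha_s\,ds\}$ is $\mathscr{F}_t$-measurable, yields the claimed bound after dividing both sides by $\exp\{p\gamma\int_0^t\alpha_s\,ds\}$.

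First I would treat the smooth exponential $V_t:=\exp\{p\gamma y_t+p\gamma\int_0^t\alpha_s\,ds\}$. Applying It\^o's formula to \eqref{myq2} with $d\langle y\rangle_t=|z_t|^2\,dt$ produces a drift proportional to $-p\gamma g(t,z_t)+\tfrac12(p\gamma)^2|z_t|^2$. The crucial—and essentially the only algebraic—point is that the bound $g(t,z)\le\alpha_t+\tfrac{\gamma}{2}|z|^2$ turns this quantity into at least $-p\gamma\alpha_t+\tfrac{p(p-1)\gamma^2}{2}|z_t|^2$, so that, since $p\ge1$, the quadratic term is nonnegative and the drift of $V$ after the $\exp\{p\gamma\int_0^t\alpha_s\,ds\}$ compensation is nonnegative; concretely $dV_t\ge p\gamma V_t z_t\,dB_t$, i.e. $V$ is a local submartingale. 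To pass from $V$ to $W$ I would use that $W_t=\max\{V_t,\;\exp\{p\gamma\int_0^t\alpha_s\,ds\}\}$, the second factor being nondecreasing and integrable hence a submartingale, and that the maximum of two submartingales is again a submartingale. (Alternatively one may apply the It\^o--Tanaka formula directly to the convex map $x\mapsto\exp\{p\gamma x^+\}$ and discard its nonnegative local-time term; both routes give the same conclusion.)

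Next I would upgrade the local submartingale $V$ to a genuine submartingale, which is exactly where the standing exponential-moment hypothesis enters. Writing $\Xi:=\exp\{p\gamma\sup_{t}|y_t|+p\gamma\int_0^T\alpha_s\,ds\}$, one has $V_t\le W_t\le\Xi$ for all $t$, and the assumption gives $\mathbf{E}[\Xi^2]<\infty$, hence $\Xi\in\mathcal{L}^1$. Taking a localizing sequence $\tau_n\uparrow T$ for $V$, each stopped process $V_{\cdot\wedge\tau_n}$ is a true submartingale dominated by $\Xi\in\mathcal{L}^1$, so conditional dominated convergence lets me pass to the limit in $V_{s\wedge\tau_n}\le\mathbf{E}_s[V_{t\wedge\tau_n}]$ and obtain the submartingale property of $V$ on $[0,T]$; combined with the integrable nondecreasing factor and the maximum argument of the previous step, this gives the submartingale property of $W$ and hence (ii). Finally, (i) follows from (ii) with no new work: the pair $(-y,-z)$ solves \eqref{myq2} with terminal value $-\eta$ and driver $\tilde g(t,z):=-g(t,-z)$, which satisfies $\tilde g(t,z)\le\alpha_t+\tfrac{\gamma}{2}|z|^2$ precisely because $|g|\le\alpha_t+\tfrac{\gamma}{2}|z|^2$, while the hypothesis is symmetric in $y$; thus (ii) applied to $(-y,-z)$ gives $\exp\{p\gamma(y_t)^-\}\le\mathbf{E}_t[\exp\{p\gamma\eta^-+p\gamma\int_t^T\alpha_s\,ds\}]$, and combining this with the $(y_t)^+$ bound via $\exp\{p\gamma|y_t|\}=\max\{\exp\{p\gamma(y_t)^+\},\exp\{p\gamma(y_t)^-\}\}$ and $\max\{\mathbf{E}_t[A],\mathbf{E}_t[B]\}\le\mathbf{E}_t[\max\{A,B\}]$ yields (i).

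The step I expect to be the main obstacle is the rigorous upgrade of $V$ from a local to a genuine submartingale and the justification of the conditional-expectation inequality: a priori the stochastic integral $\int p\gamma V_s z_s\,dB_s$ need not be a true martingale, and the estimate can fail without adequate integrability. The exponential-moment hypothesis, with the factor $2p\gamma$, is exactly what supplies the $\mathcal{L}^1$-—indeed $\mathcal{L}^2$-—domination required to control the localization and the martingale part, so the whole argument hinges on deploying that hypothesis carefully at this point.
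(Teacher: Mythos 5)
Your proof is correct, and it is essentially the argument behind the result the paper invokes: the paper itself gives no proof of this lemma but simply cites \cite[Lemmas A3 and A4]{FHT2}, whose proofs rest on the same exponential change of variables, convexity/It\^o--Tanaka step, and localization upgraded to a true (sub)martingale via the $2p\gamma$ exponential-moment hypothesis. In particular, you correctly identified the two delicate points --- that $W_t=\max\{V_t,\exp\{p\gamma\int_0^t\alpha_s\,ds\}\}$ reduces (ii) to the smooth exponential $V$, and that the hypothesis furnishes the $\mathcal{L}^2$ (hence $\mathcal{L}^1$) domination needed to pass from local to genuine submartingale --- and your deduction of (i) from (ii) via $(-y,-z)$ and $\max\{\mathbf{E}_t[A],\mathbf{E}_t[B]\}\leq\mathbf{E}_t[\max\{A,B\}]$ is sound.
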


\medskip
\noindent We are now ready to combine the $\theta$-method and the representation result to prove Theorem \ref{my116}. In order to illustrate the main idea, we first deal with the uniqueness. 
\medskip

\begin{lemma}\label{myq7900}
	Assume that all the conditions of Theorem \ref{my116} are satisfied. Then, the  quadratic mean reflected BSDE \eqref{my1} has at most one deterministic solution $(Y,Z,K)\in\mathbb{S}\times \mathcal{H}^{d}\times\mathcal{A}$.
\end{lemma}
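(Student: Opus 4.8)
The plan is to establish uniqueness via the $\theta$-method combined with the representation result of Lemma~\ref{my1046}. Suppose $(Y^1,Z^1,K^1)$ and $(Y^2,Z^2,K^2)$ are two deterministic solutions in $\mathbb{S}\times\mathcal{H}^{d}\times\mathcal{A}$. By Remark~\ref{my3996}, Lemma~\ref{my1046} applies, so for each $i$ we have the decomposition $Y^i_t = y^i_t + \sup_{t\le s\le T} L_s(y^i_s)$, where $(y^i,z^i)$ solves the BSDE \eqref{myq1042} with driver $f^{Y^i}(s,z)=f(s,Y^i_s,\mathbf{P}_{Y^i_s},z)$ and the common terminal condition $\xi$. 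By \eqref{my399}, controlling the difference $Y^1-Y^2$ reduces to controlling $y^1-y^2$ in the appropriate norm, since $|\sup_s L_s(y^1_s)-\sup_s L_s(y^2_s)|\le \kappa\sup_s\mathbf{E}[|y^1_s-y^2_s|]$. Thus the whole problem is transferred to comparing the two driverless-reflection BSDEs for $y^1$ and $y^2$.

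The core of the argument is the $\theta$-technique: for $\theta\in(0,1)$, I would introduce the process $\theta y^1 - y^2$ (or $y^1-\theta y^2$, handling the convex and concave cases symmetrically). Writing the BSDE it satisfies and exploiting the convexity (resp. concavity) of $z\mapsto f(t,y,v,z)$ from (H2'')(3), one bounds the generator of $\theta y^1-y^2$ from above by an expression of the form $\alpha^\theta_t + \frac{\gamma}{\cdots}|z|^2 + \beta(\text{terms in }|Y^1-Y^2| \text{ and } W_1)$, where the convexity inequality $f(s,\cdot,\theta z^1) \le \theta f(s,\cdot,z^1) + (1-\theta)f(s,\cdot,0)$ is what makes the quadratic term controllable. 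Then Lemma~\ref{my7}(ii) converts this pointwise bound on the generator into an exponential a priori estimate $\exp\{p\gamma(\theta y^1_t - y^2_t)^+\}\le \mathbf{E}_t[\exp\{p\gamma\xi^\theta + p\gamma\int_t^T \alpha^\theta_s\,ds\}]$, where $\xi^\theta=(\theta-1)\xi$ vanishes as $\theta\uparrow 1$. Because the terminal conditions coincide, letting $\theta\uparrow 1$ forces $y^1\le y^2$; the symmetric estimate gives the reverse inequality, yielding $y^1=y^2$, and hence $Y^1=Y^2$ by the representation, after which $Z^1=Z^2$ and $K^1=K^2$ follow immediately.

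The main obstacle is the bookkeeping in the $\theta$-step, where the distributional dependence through $W_1(\mathbf{P}_{Y^1_s},\mathbf{P}_{Y^2_s})\le \mathbf{E}[|Y^1_s-Y^2_s|]$ and the mean-reflection correction $\sup_s L_s(y^i_s)$ both feed back into the generator, so the estimate is not purely local to $(y^i,z^i)$ but couples to $Y^1-Y^2$ itself. The plan is to absorb these Lipschitz-in-$(y,v)$ contributions using (H2'')(2) and \eqref{my399}, treating them as a linear perturbation that, together with the integrability $\int_0^T\alpha_t\,dt\in\mathbb{L}$ guaranteeing the exponential-moment hypothesis of Lemma~\ref{my7}, keeps all exponential expectations finite as $\theta\uparrow 1$. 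The delicate point will be verifying that the modified coefficient $\alpha^\theta$ (which collects the $(1-\theta)f(s,\cdot,0)$ terms and the $\beta$-Lipschitz terms) still satisfies the required exponential integrability uniformly enough that the limit $\theta\uparrow 1$ can be taken, closing the Grönwall-type loop between the estimate on $y^1-y^2$ and the feedback through $Y^1-Y^2$.
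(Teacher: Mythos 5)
Your overall architecture coincides with the paper's: reduce to the auxiliary BSDEs for $y^i$ via Lemma \ref{my1046} and Remark \ref{my3996}, transfer between $Y$-differences and $y$-differences through \eqref{my399} and (H4), and run a $\theta$-method exploiting convexity/concavity from (H2''). However, there are two genuine gaps in the execution. First, your central estimate is mis-stated. The relevant convex combination is $z^2=\theta z^1+(1-\theta)\cdot\frac{z^2-\theta z^1}{1-\theta}$, not $\theta z^1=\theta z^1+(1-\theta)\cdot 0$, so the generator bound involves $f$ evaluated at the point $\pm(\theta z^1-z^2)/(1-\theta)$. Consequently, if you work with the un-normalized process $\theta y^1-y^2$, its generator has quadratic coefficient $\frac{\gamma}{2(1-\theta)}$, and Lemma \ref{my7}(ii) cannot be applied with the fixed exponent $p\gamma$ as you claim; if instead you normalize and take $\delta_\theta y=(\theta y^1-y^2)/(1-\theta)$ as the paper does, the quadratic coefficient is again $\gamma/2$ but the terminal condition is $-\xi$, which does \emph{not} vanish as $\theta\uparrow 1$. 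Either way, the mechanism you invoke (``the terminal term $(\theta-1)\xi$ vanishes, hence $y^1\le y^2$'') is unavailable. The correct mechanism is different: one proves that the exponential moments of the \emph{normalized} difference are bounded \emph{uniformly in $\theta$}, and then uses the identity $Y^1-Y^2=(1-\theta)(\delta_\theta Y+Y^1)$, so that the prefactor $(1-\theta)$, not a vanishing terminal condition, forces $Y^1=Y^2$ as $\theta\uparrow 1$; this is precisely why (H1'') demands exponential moments of $\xi$ of every order.

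Second, you correctly identify the feedback loop — the generator of the $\delta_\theta y$-BSDE involves $\delta_\theta Y$ through the $\beta$-Lipschitz and $W_1$ terms, while $\delta_\theta Y$ is controlled by $\delta_\theta y$ via the representation and (H4) — but your plan to absorb these contributions ``as a linear perturbation'' into a coefficient $\alpha^\theta$ and close a ``Gr\"onwall-type loop'' does not work: these terms are not exogenous data; after Lemma \ref{my7} is applied they appear as $\beta(T-t)\sup_{s\in[t,T]}\delta_\theta\overline{Y}_s$ \emph{inside an exponential}, i.e. the very quantity being estimated. The paper's device for closing this loop is the crux of the proof and is absent from your proposal: restrict to a short horizon $[T-h,T]$ with $(16+32\kappa)\beta h<1$, combine Doob's maximal inequality with H\"older's inequality to obtain a self-bounding inequality of the form $\mathbf{E}[e^X]\le A\,\mathbf{E}[e^X]^{(16+32\kappa)\beta h}$ with $X=p\gamma\sup_{s\in[T-h,T]}\delta_\theta\overline{Y}_s$ (the finiteness of $\mathbf{E}[e^X]$ for each fixed $\theta$, needed to divide, comes from $Y^i\in\mathbb{S}$), deduce $\mathbf{E}[e^X]\le A^{1/(1-(16+32\kappa)\beta h)}$ uniformly in $\theta$, let $\theta\uparrow 1$ to get $Y^1=Y^2$ on $[T-h,T]$, recover $Z^1=Z^2$ and $K^1=K^2$ by It\^o's formula, and finally iterate over finitely many subintervals to cover $[0,T]$. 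Without this small-horizon self-bounding step, or a substitute for it, your argument does not close.
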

\begin{proof}
\noindent For $i=1,2$,
let $({Y}^i,{Z}^i,{K}^i)$ be a  deterministic $ \mathbb{S}\times\mathcal{H}^d\times\mathcal{A}$-solution to the quadratic mean reflected BSDE~\eqref{my1}. From  Lemma \ref{my1046} and Remark \ref{my3996}, we have
\begin{align}\label{myq91}
Y^{i}_t:=y^{i}_t+\sup\limits_{t\leq s\leq T}L_s(y^i_s), \ \ \forall t\in[0,T],
\end{align}
where $(y^{i},z^i)\in\mathbb{S}\times\mathcal{H}^d$ is the solution to the following quadratic BSDE:
\begin{align*}
y_t^i=\xi+\int^{T}_t f\left(s,Y^i_s,\mathbf{P}_{Y^i_s},z^i_s\right)ds-\int^{T}_t z^i_s dB_s.
\end{align*}

\noindent Assume without loss of generality that $f(t,y,v,\cdot)$ is concave (see Remark \ref{myrk11}). For each $\theta\in (0,1)$, we denote 
\[ \delta_{\theta}\ell=\frac{\theta \ell^1-\ell^2}{1-\theta}, \ \delta_{\theta}\widetilde{\ell}=\frac{\theta \ell^2-\ell^1}{1-\theta} ~~\text{and} ~~ \delta_{\theta}\overline{\ell}:= |\delta_{\theta}\ell|+|\delta_{\theta}\widetilde{\ell}|\]
for $\ell=Y,y$ and $z$. Then, the pair of processes $(\delta_{\theta}y,\delta_{\theta}z)$
satisfies the following BSDE:
 \begin{align}\label{myq123}
\begin{split}
\delta_{\theta}y_t=&-\xi+\int^T_t\left(\delta_{\theta}f(s,\delta_{\theta}z_s)+\delta_{\theta}f_0(s)\right)ds-\int^T_t\delta_{\theta}z_sdB_s,
\end{split}
\end{align}
where the generator is given by
{\small \begin{align*}
 & \delta_{\theta}f_0(t)=
\frac{1}{1-\theta}\left(f(t,Y^1_{t}, \mathbf{P}_{Y^1_{t}}, z^{2}_t)-f(t,Y^2_{t},\mathbf{P}_{Y^2_{t}}, z^{2}_t)\right)\\
&
\delta_{\theta}f(t,z)=\frac{1}{1-\theta}\left(
\theta f(t,Y^1_t, \mathbf{P}_{Y^1_{t}}, z^{1}_t)- f(t,Y^1_t,\mathbf{P}_{Y^1_{t}}, -(1-\theta)z+\theta z^{1}_t)\right).
\end{align*}}

\noindent Recalling assumptions (H2''), we have that
\begin{align*}
 &\delta_{\theta}f_0(t)\leq \beta\big( |Y^1_t|+|\delta_{\theta}Y_t|+\mathbf{E}[|Y^1_t|+|\delta_{\theta}Y_t|]\big), \\  &\delta_{\theta}f(t,z)
 \leq -f(t,Y^1_t, \mathbf{P}_{Y^1_{t}}, -z)
 \leq \alpha_t+\beta\big(|Y^1_t|+\mathbf{E}[|Y^1_t|]\big)+\frac{\gamma}{2}|z|^2.
\end{align*}
Set $C_1:=\sup\limits_{s\in[0,T]}\mathbf{E}[|Y^{1}_{s}|+|Y^{2}_{s}|]$ and
{ \begin{align*}
&\chi= \int^T_0\alpha_sds+2\beta C_1 T+2\beta T\bigg(\sup\limits_{s\in[0,T]}|Y^{1}_{s}|+\sup\limits_{s\in[0,T]}|Y^{2}_{s}|\bigg),\\
&\widetilde{\chi}=\int^T_0\alpha_sds+2\beta C_1 T+2\beta T\bigg(\sup\limits_{s\in[0,T]}|Y^{1}_{s}|+\sup\limits_{s\in[0,T]}|Y^{2}_{s}|\bigg)+\sup\limits_{s\in[0,T]}|y^{1}_{s}|+\sup\limits_{s\in[0,T]}|y^{2}_{s}|.\end{align*}
}

\noindent Using assertion (ii) of Lemma \ref{my7} to  \eqref{myq123}, we derive that  for any $p\geq 1$,
{ \begin{align*}	
 \begin{split}
 \exp\left\{{p\gamma}\big(\delta_{\theta}y_t\big)^+\right\}\leq   \mathbf{E}_t\left[\exp\bigg\{p\gamma \bigg(|\xi|+\chi+\beta (T-t)\bigg(\sup\limits_{s\in[t,T]}|\delta_{\theta}Y_{s}|+\sup\limits_{s\in[t,T]}\mathbf{E}[|\delta_{\theta}Y_{s}|] \bigg)\bigg)\bigg\} \right].
 \end{split}
	\end{align*}
}
\noindent Similarly, we have
{ \begin{align*}	
 \begin{split}
 \exp\left\{{p\gamma}\left(\delta_{\theta}\widetilde{y}_t\right)^+\right\}\leq   \mathbf{E}_t\left[\exp\bigg\{p\gamma \bigg(|\xi|+\chi+\beta (T-t)\bigg(\sup\limits_{s\in[t,T]}|\delta_{\theta}\widetilde{Y}_{s}|+\sup\limits_{s\in[t,T]}\mathbf{E}[|\delta_{\theta}\widetilde{Y}_{s}|]\bigg)\bigg)\bigg\}	\right].
 \end{split}
	\end{align*}
}

\noindent In view of the fact that
\begin{align*}
\left(\delta_{\theta}{y}\right)^-
\leq \left(\delta_{\theta}\widetilde{y}\right)^++2|y^{2}| \ \text{and}\ \left(\delta_{\theta}\widetilde{y}\right)^-
\leq \left(\delta_{\theta}{y}\right)^++2|y^{1}|,
\end{align*}
we have
{  \begin{align*}
	\begin{split}
	\exp\left\{p\gamma \left|\delta_{\theta}{y}_t\right|\right\}\vee \exp\left\{p\gamma \left|\delta_{\theta}\widetilde{y}_t\right|\right\} &\leq
	\exp\left\{{p\gamma}\left(\left( \delta_{\theta}{y}_t\right)^++\left(\delta_{\theta}\widetilde{y}_t\right)^++2|y^{1}_t|+2|y^{2}_t|\right)\right\}\\
	& \hspace*{-1cm}\leq \mathbf{E}_t\left[\exp\bigg\{p\gamma \bigg(|\xi|+\widetilde{\chi}+\beta (T-t)\bigg(\sup\limits_{s\in[t,T]}\delta_{\theta}\overline{Y}_{s}+\sup\limits_{s\in[t,T]}\mathbf{E}[\delta_{\theta}\overline{Y}_{s}]\bigg)\bigg)\bigg\}\right]^2.
	\end{split}
	\end{align*}
}
Applying {Doob's maximal inequality} and
H\"{o}lder's inequality, we get  that for each  $p\geq 1$ and $t\in[0,T]$,
{\small  \begin{align}\label{myq2231}
	\begin{split}
	\mathbf{{E}}\left[\exp\bigg\{p\gamma \sup\limits_{s\in[t,T]}\delta_{\theta}\overline{y}_s\bigg\}\right]&\leq \mathbf{{E}}\left[\exp\bigg\{p\gamma \sup\limits_{s\in[t,T]}|\delta_{\theta}{y}_s|\bigg\}\exp\bigg\{p\gamma \sup\limits_{s\in[t,T]}|\delta_{\theta}\widetilde{y}_s|\bigg\}\right]
	\\ &\leq  4  \mathbf{E}\left[\exp\bigg\{4p\gamma \bigg(|\xi|+\widetilde{\chi} +\beta (T-t)\bigg(\sup\limits_{s\in[t,T]}\delta_{\theta}\overline{Y}_{s}+\sup\limits_{s\in[t,T]}\mathbf{E}[\delta_{\theta}\overline{Y}_{s}]\bigg) \bigg)\bigg\}	\right].
	\end{split}
	\end{align}
}
Set $ C_2:=\sup\limits_{0\leq s\leq T}|L_s(0)|
+2\kappa \sup\limits_{s\in[0,T]}\mathbf{E}\big[|y^{1}_{s}|+|y^{2}_{s}|\big]$. Recalling \eqref{myq91} and assumption (H4), we derive that
\begin{align*}
|\delta_{\theta} Y_t|\leq C_2+\left|\delta_{\theta}{y}_t\right|+\kappa\sup\limits_{t\leq s\leq T}\mathbf{E}\left[|\delta_{\theta}y_s|\right]\  \text{and} \ |\delta_{\theta} \widetilde{Y}_t|\leq C_2+\left|\delta_{\theta}\widetilde{y}_t\right|+\kappa\sup\limits_{t\leq s\leq T}\mathbf{E}\left[|\delta_{\theta}\widetilde{y}_s|\right], \ \forall t\in[0,T],
\end{align*}
which together with  Jensen's inequality implies that for each  $p\geq 1$ and $t\in[0,T]$,
{\small  \begin{align}\label{myq22313}
	\begin{split}
&\mathbf{{E}}\left[\exp\big\{p\gamma \sup\limits_{s\in[t,T]}\delta_{\theta}\overline{Y}_s\big\}\right]\leq e^{2p\gamma C_2}\mathbf{{E}}\left[\exp\big\{p\gamma \sup\limits_{s\in[t,T]}\delta_{\theta}\bar{y}_s\big\}\right]\mathbf{{E}}\left[\exp\bigg\{2\kappa p\gamma\sup\limits_{s\in[t,T]}\delta_\theta\bar{y}_s\bigg\}\right] 
 \\
&\hspace*{1cm}\leq  e^{2p\gamma C_2}\mathbf{{E}}\left[\exp\big\{(2+4\kappa)p\gamma \sup\limits_{s\in[t,T]}\delta_{\theta}\bar{y}_s\big\}\right]\\
&\hspace*{1cm}\leq  4 \mathbf{E}\left[\exp\bigg\{(8+16\kappa)p\gamma \bigg(|\xi|+\widetilde{\chi}+C_2 +\beta (T-t)\big(\sup\limits_{s\in[t,T]}\delta_{\theta}\overline{Y}_{s}+\sup\limits_{s\in[t,T]}\mathbf{E}[\delta_{\theta}\overline{Y}_{s}]\big) \bigg)\bigg\}	\right]
\\
&\hspace*{1cm}\leq  4 \mathbf{E}\left[\exp\bigg\{(8+16\kappa)p\gamma \bigg(|\xi|+\widetilde{\chi}+C_2 +\beta (T-t)\sup\limits_{s\in[t,T]}\delta_{\theta}\overline{Y}_{s} \bigg)\bigg\}	\right] \\
& \hspace*{3cm} \times \mathbf{E}\left[\exp\bigg\{(8+16\kappa)p\gamma\beta (T-t) \sup\limits_{s\in[t,T]}\delta_{\theta}\overline{Y}_{s}\bigg\}	\right],
	\end{split}
	\end{align}
}where we have used \eqref{myq2231} in the third inequality.

\medskip \noindent  Choose a  constant $h\in (0,T]$ depending only on $\beta$ and $\kappa$ such that $ (16+32\kappa)\beta h <1 $.
In the spirit of H\"{o}lder's inequality, we derive that for any  $p\geq 1$,
\begin{align*}	
	\begin{split}
	&\mathbf{{E}}\bigg[\exp\big\{p\gamma \sup\limits_{s\in[T-h,T]}\delta_{\theta}\overline{Y}_s\big\}\bigg]\\&\leq 4\mathbf{{E}}\bigg[\exp\bigg\{{(16+32\kappa) p\gamma}(|\xi|+\widetilde{\chi}+C_2)\bigg\}		 \bigg]^{\frac{1}{2}}  \mathbf{E}\bigg[\exp\bigg\{(16+32\kappa)\beta h p\gamma \sup\limits_{s\in[t,T]}\delta_{\theta}\overline{Y}_{s}\bigg\}	\bigg]\\
	&\leq 4\mathbf{{E}}\bigg[\exp\bigg\{{(16+32\kappa) p\gamma}(|\xi|+\widetilde{\chi}+C_2)\bigg\}		 \bigg]\mathbf{{E}}\bigg[\exp\bigg\{p\gamma\sup\limits_{s\in[T-h,T]}\delta_{\theta}\overline{Y}_{s}\bigg\}		 \bigg]^{(16+32\kappa) \beta h},
	\end{split}
	\end{align*}
which together with the fact that $(16+32\kappa)\beta h<1$ implies that for any $p\geq 1$ and $\theta\in(0,1)$
\[
\mathbf{{E}}\left[\exp\bigg\{p\gamma \sup\limits_{s\in[T-h,T]}\delta_{\theta}\overline{Y}_s\bigg\}\right]\leq \mathbf{{E}}\bigg[4\exp\bigg\{{(16+32\kappa) p\gamma}(|\xi|+\widetilde{\chi}+C_2)\bigg\}		 \bigg]^{\frac{1}{1-(16+32\kappa)\beta h}}<\infty.
\]
Note that
${Y}^{1}-{Y}^{2}= (1-\theta)(\delta_{\theta}{Y}+Y^{1}). $ It follows that
\begin{align*}
\mathbf{E}\bigg[\sup\limits_{t\in[T-h,T]}
\big|{Y}^{1}_t-{Y}^{2}_t\big|\bigg]\leq (1-\theta)\bigg(\frac{1}{\gamma}\sup\limits_{\theta\in(0,1)}\mathbf{{E}}\bigg[\exp\big\{\gamma \sup\limits_{s\in[T-h,T]}\delta_{\theta}\overline{Y}_s\big\}\bigg]+\mathbf{{E}}\bigg[\sup\limits_{t\in[0,T]}\big|{Y}^{1}_t\big|\bigg]\bigg).
\end{align*}
Letting $\theta\rightarrow 1$ yields $Y^1=Y^2$. Applying It\^o's formula to $\left|Y^1-Y^2\right|^2$ yields $(Z^1,K^1)=(Z^2,K^2)$  on $[T-h,T]$. The uniqueness of the  solution on the whole interval is inherited from the uniqueness on each small time interval. The proof is complete.
\end{proof}

\begin{remark}\label{myrk11}
    \noindent  {\upshape In the convex case, one should use $\ell^1-\theta \ell^{2}$ and   $\ell^{2}-\theta \ell^{1}$ instead of
 $\theta \ell^{1}- \ell^{2}$ and  $\theta \ell^{2}- \ell^{1}$  in the definition of $\delta_{\theta}\ell$ and $\delta_{\theta}\widetilde{\ell}$, respectively. Then the generator of  BSDE \eqref{myq123} satisfies
 \begin{align*}
 &\delta_{\theta}f_0(t)\leq \beta \big(|Y^2_t|+|\delta_{\theta}Y_t|+\mathbf{E}[(|Y^2_t|+|\delta_{\theta}Y_t|)]\big),\\
& \delta_{\theta}f(t,z)
 \leq f(t,Y^2_t,\mathbf{P}_{Y^2_t}, z)
 \leq \alpha_t+\beta \big(|Y^2_t|+\mathbf{E}[|Y^2_t|]\big)+\frac{\gamma}{2}|z|^2.
\end{align*}
One can check that \eqref{myq2231} and \eqref{myq22313} still hold in this context.
}
\end{remark}

\begin{remark}{\upshape Due to the presence of mean reflection,
one cannot directly apply the $\theta$-method to establish the desired estimates for  quadratic mean reflected BSDEs with unbounded terminal condition as in \cite{BH}. With the help of  Lemma \ref{my1046}, we could overcome this difficulty by analyzing a standard  quadratic BSDE. In a similar way,  \cite{HMW}  established the well-posedness of quadratic mean-field reflected BSDEs with unbounded terminal condition via nonlinear Snell envelope representation and quadratic BSDEs techniques.  }
\end{remark}

\noindent We now turn to the existence part of our result.
\begin{lemma}\label{myq79}
	Assume that all the conditions of Theorem \ref{my116} hold and $U\in\mathbb{S}$. Then, the quadratic mean reflected BSDE \eqref{myq527}, with driver $f^U$,
	admits a unique solution $(Y,Z,K)\in \mathbb{S}\times\mathcal{H}^d\times\mathcal{A}$.
\end{lemma}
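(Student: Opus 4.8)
The plan is to follow the template already used in Lemmas \ref{myq1792} and \ref{myq792}: since $U$ is frozen, the driver $f^U(s,z)=f(s,U_s,\mathbf{P}_{U_s},z)$ depends only on the control variable, so I reduce the mean reflected problem \eqref{myq527} to the standard quadratic BSDE \eqref{myq1042} with driver $f^U$, solve the latter in $\mathbb{S}\times\mathcal{H}^d$, and then recover $(Y,Z,K)$ through the representation of Lemma \ref{my1046}, which is available here by Remark \ref{my3996}.

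First I would solve the scalar quadratic BSDE \eqref{myq1042}. By (H2'')(1) the frozen driver obeys the quadratic bound $|f^U(s,z)|\le \tilde{\alpha}_s+\frac{\gamma}{2}|z|^2$ with $\tilde{\alpha}_s:=\alpha_s+\beta(|U_s|+\mathbf{E}[|U_s|])$, and by (H2'')(3) it is convex or concave in $z$. Since $U\in\mathbb{S}$, the quantity $\sup_{s}|U_s|$ has exponential moments of every order, and together with $\int_0^T\alpha_s\,ds\in\mathbb{L}$ this gives $\int_0^T\tilde{\alpha}_s\,ds\in\mathbb{L}$; combined with $\xi\in\mathbb{L}$ from (H1''), the theory of quadratic BSDEs with convex/concave generators and unbounded terminal value (cf. \cite{BH2006,BH2008}) yields a unique solution $(y,z)\in\mathbb{S}\times\mathcal{H}^d$. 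Feeding this data into assertion (i) of Lemma \ref{my7} produces, for every $p\ge1$, the bound $\exp\{p\gamma|y_t|\}\le\mathbf{E}_t[\exp\{p\gamma|\xi|+p\gamma\int_t^T\tilde{\alpha}_s\,ds\}]$, which after Doob's maximal inequality reconfirms $y\in\mathbb{S}$.

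Next I would build the reflected triple as in Lemma \ref{my1046}, namely $Y_t:=y_t+\sup_{t\le s\le T}L_s(y_s)$, $Z_t:=z_t$ and $K_t:=\sup_{0\le s\le T}L_s(y_s)-\sup_{t\le s\le T}L_s(y_s)$. I must then check that $K\in\mathcal{A}$: the map $s\mapsto L_s(y_s)$ is deterministic and, by \eqref{my399}, satisfies $|L_s(y_s)|\le|L_s(0)|+\kappa\mathbf{E}[|y_s|]$, which is bounded on $[0,T]$ because $L_\cdot(0)$ is continuous and $y\in\mathbb{S}\subset\mathcal{S}^1$, so $\sup_sL_s(y_s)<\infty$. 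Continuity of $s\mapsto L_s(y_s)$ (from continuity of $t\mapsto L_t(0)$, the Lipschitz estimate \eqref{my399} and continuity of $y$) makes $K$ continuous, while $t\mapsto\sup_{t\le s\le T}L_s(y_s)$ is non-increasing, so $K$ is non-decreasing with $K_0=0$; hence $K\in\mathcal{A}$, and since $K$ is bounded and $y\in\mathbb{S}$ we get $Y\in\mathbb{S}$. By \cite[Proposition 7]{BH} this triple solves \eqref{myq527}, and uniqueness is immediate from Lemma \ref{my1046}, since every solution is forced into this representation and the underlying $(y,z)$ is unique.

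I expect the main obstacle to be the moment bookkeeping in the first step: verifying that the enlarged coefficient $\tilde{\alpha}$ retains exponential moments of all orders under $U\in\mathbb{S}$, and that the resulting $y$ again lies in $\mathbb{S}$, for only then is $L_s(y_s)$ finite and $K$ well-defined. Once $(y,z)\in\mathbb{S}\times\mathcal{H}^d$ is secured, the reflected construction and its verification are routine consequences of Lemma \ref{my1046} and \eqref{my399}.
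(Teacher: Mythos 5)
Your proposal is correct and follows essentially the same route as the paper: freeze the driver $f^U$, solve the standard quadratic BSDE via the convex/unbounded-terminal-value theory of \cite{BH2008} (the paper cites \cite[Corollary 6]{BH2008}, using exactly your bound $|f^U(t,z)|\leq \alpha_t+\beta(|U_t|+\mathbf{E}[|U_t|])+\frac{\gamma}{2}|z|^2$), then obtain the reflected triple from \cite[Proposition 7]{BH} and identify $Z=z$ and deduce uniqueness via Lemma \ref{my1046} and Remark \ref{my3996}. Your explicit construction of $(Y,Z,K)$ and hand-verification that $K\in\mathcal{A}$ and $Y\in\mathbb{S}$ merely spells out what the paper delegates to \cite[Proposition 7]{BH}, so there is no substantive difference.
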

\begin{proof} The uniqueness follows from  Lemma \ref{my1046} and Remark \ref{my3996}.
	In view of assumption (H2''), we have
	\begin{align}\label{myq82}
	|f(t,U_t,\mathbf{P}_{U_t},z)|\leq \alpha_t+\beta (|U_t|+\mathbf{E}\left[|U_t|\right])+\frac{\gamma}{2}|z|^2.
	\end{align}
\noindent It follows from \cite[Corollary 6]{BH2008} that the BSDE \eqref{myq527} admits a unique solution $(y,z)\in \mathbb{S}\times \mathcal{H}^d$. 
Then it follows from  \cite[Proposition 7]{BH} that the   mean reflected BSDE \eqref{my1049} with fixed driver $f(\cdot,{U_{\cdot}},\mathbf{P}_{U_{\cdot}},z_{\cdot})$
has a unique deterministic solution $({Y},{Z},{K})\in \mathbb{S}\times\mathcal{H}^{d}\times\mathcal{A}$. In the spirit of Lemma \ref{my1046} and Remark \ref{my3996}, we conclude that $z={Z}$, which implies that $(Y,Z,K)$ is the desired solution. This completes the proof.
\end{proof}
\medskip

\noindent
According to Lemma \ref{myq79}, we recursively define a sequence of stochastic processes $(Y^{(m)})_{m=1}^{\infty}$ through  the following quadratic  BSDE with mean reflection:
\begin{align}\label{myq78}
\begin{cases}
&Y_t^{(m)}=\xi+\int_t^T f(s,Y^{(m-1)}_s,\mathbf{P}_{Y^{(m-1)}_s},Z^{(m)}_s)ds-\int_t^T Z^{(m)}_sdB_s+K^{(m)}_T-K^{(m)}_t, \quad 0\le t\le T,\\
& \mathbf{E}[\ell(t,Y_t^{(m)})]\geq 0,\quad \forall t\in [0,T] \,\,\mbox{ and }\,\, \int_0^T \mathbf{E}[\ell(t,Y_t^{(m)})]dK^{(m)}_t = 0,
\end{cases}
\end{align}
where $Y^{(0)}\equiv 0$.
It is obvious that $(Y^{(m)},Z^{(m)},K^{(m)})\in \mathbb{S}\times\mathcal{H}^d\times\mathcal{A}$. 
\medskip

\noindent Next, we  apply  a $\theta$-method and BSDEs techniques to prove that $(Y^{(m)},Z^{(m)},K^{(m)})$ defines a Cauchy sequence: the corresponding limit is the  desired solution. We need the following technical results to complete the proof, whose proofs are postponed to the Appendix.

\begin{lemma}\label{myq7901}
	Assume that the conditions of Theorem \ref{my116} are fulfilled. Then, for any $p\geq 1$, we have
	\begin{align*}
	\begin{split}
\sup\limits_{m\geq 0}\mathbf{E}\left[\exp\left\{p\gamma\sup\limits_{s\in[0,T]}|Y^{(m)}_s|\right\}+\left(\int^T_0|Z^{(m)}_t|^2dt\right)^p+|K^{(m)}_T|\right]<\infty.
	\end{split}
	\end{align*}
\end{lemma}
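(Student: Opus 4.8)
The plan is to reduce everything, via the representation of Lemma \ref{my1046} (valid here by Remark \ref{my3996}), to uniform exponential-moment estimates for the \emph{standard} quadratic BSDEs attached to the iterates. Write $(y^{(m)},z^{(m)})\in\mathbb S\times\mathcal H^d$ for the solution with terminal value $\xi$ and driver $f(s,Y^{(m-1)}_s,\mathbf P_{Y^{(m-1)}_s},z)$. The representation yields $Z^{(m)}=z^{(m)}$, $K^{(m)}_T=\sup_{0\le s\le T}L_s(y^{(m)}_s)$ (using $K_0=0$ and $L_T(\xi)=0$, which holds since $\mathbf E[\ell(T,\xi)]\ge0$ under (H1'')), and $Y^{(m)}_t=y^{(m)}_t+\sup_{t\le s\le T}L_s(y^{(m)}_s)$. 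Combining with \eqref{my399} (taking $\eta^2=0$) gives the pointwise bounds
\[
\sup_{t\in[0,T]}|Y^{(m)}_t|\le \sup_{t\in[0,T]}|y^{(m)}_t|+\sup_{0\le s\le T}|L_s(0)|+\kappa\sup_{0\le s\le T}\mathbf E[|y^{(m)}_s|],\qquad |K^{(m)}_T|\le\sup_{s}|L_s(0)|+\kappa\sup_{s}\mathbf E[|y^{(m)}_s|].
\]
Since the deterministic mean term $\sup_s\mathbf E[|y^{(m)}_s|]$ is dominated by the exponential moments of $\sup_t|y^{(m)}_t|$, a uniform control of the latter immediately delivers the $Y$- and $K$-parts of the claim.

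The engine is the growth bound. By (H2'')(1) and $W_1(\mathbf P_U,\delta_0)=\mathbf E[|U|]$, the driver of $y^{(m)}$ satisfies $|f(s,Y^{(m-1)}_s,\mathbf P_{Y^{(m-1)}_s},z)|\le\widetilde\alpha^{(m-1)}_s+\tfrac{\gamma}{2}|z|^2$, where $\widetilde\alpha^{(m-1)}_s:=\alpha_s+\beta(|Y^{(m-1)}_s|+\mathbf E[|Y^{(m-1)}_s|])$. Assertion (i) of Lemma \ref{my7} then gives, for every $p\ge1$ and every $t$,
\[
\exp\{p\gamma|y^{(m)}_t|\}\le\mathbf E_t\Big[\exp\Big\{p\gamma|\xi|+p\gamma\int_t^T\widetilde\alpha^{(m-1)}_s\,ds\Big\}\Big].
\]
This is an inductive relation in $m$: the exponential moments of the $m$-th iterate are governed by those of the $(m-1)$-th, with $Y^{(m-1)}$ entering only through the time integral $\int_t^T\beta(|Y^{(m-1)}_s|+\mathbf E[|Y^{(m-1)}_s|])\,ds$.

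To make the bound uniform in $m$ I would localize to a small interval $[T-h,T]$ with $\beta h$ small, where $\int_t^T\beta|Y^{(m-1)}_s|\,ds\le\beta h\sup_{[T-h,T]}|Y^{(m-1)}|$. Taking the supremum over $t$ and applying Doob's maximal inequality together with H\"older's inequality (splitting off the deterministic factors coming from $|L_s(0)|$ and $\mathbf E[|Y^{(m-1)}|]$), one arrives at an estimate of the schematic form
\[
\mathbf E\big[\exp\{p\gamma\sup_{[T-h,T]}|Y^{(m)}|\}\big]\le C(p)\,\mathbf E\big[\exp\{cp\gamma\beta h\sup_{[T-h,T]}|Y^{(m-1)}|\}\big]^{\theta},
\]
in which the smallness of $\beta h$ makes the previous iterate appear with a coefficient strictly below one in the exponent. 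Choosing $h$ depending only on $\beta,\kappa$, the induced map on the scale of exponential moments leaves invariant a suitable family of balls $\{R(p)\}_{p\ge1}$; since $Y^{(0)}\equiv0$ lies in every such ball, so do all iterates, which is the uniform bound on $[T-h,T]$. A global bound follows by patching: on $[T-2h,T-h]$ the role of $\xi$ is played by $Y^{(m)}_{T-h}$, whose exponential moments are already uniformly bounded, and after finitely many steps one covers $[0,T]$.

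The $Z$-estimate is then obtained from the uniform $Y$-bound by the standard a priori energy estimate for quadratic BSDEs as in \cite{FHT2}: applying It\^o's formula to $|y^{(m)}|^2$ and using the growth of $\widetilde\alpha^{(m-1)}$ controls $\mathbf E[(\int_0^T|z^{(m)}_s|^2ds)^p]$ by exponential moments of $\sup_t|y^{(m)}_t|$ and of $\int_0^T\widetilde\alpha^{(m-1)}_s\,ds$, all already bounded uniformly in $m$; since $Z^{(m)}=z^{(m)}$, this closes the argument. I expect the main obstacle to be precisely the uniformity in $m$: because Doob's inequality mildly raises the exponent $p$, the exponential-moment recursion couples distinct levels $p$, and the argument only closes because localization to $[T-h,T]$ renders the coefficient of the previous iterate strictly contractive, permitting an invariant family $\{R(p)\}_{p\ge1}$ to be built across all levels at once.
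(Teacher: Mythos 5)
Your proposal is correct and follows essentially the same route as the paper's proof: the representation of Lemma \ref{my1046} (via Remark \ref{my3996}) reduces everything to the auxiliary quadratic BSDEs $(y^{(m)},z^{(m)})$, Lemma \ref{my7}(i) with the growth bound from (H2'') gives the conditional exponential estimate, and localization to $[T-h,T]$ with $\beta h$ small combined with Doob, H\"older and Jensen yields a recursion in $m$ in which the previous iterate enters with exponent strictly below one, after which one patches finitely many intervals using $Y^{(m)}_{T-h}$ as terminal data; the $K$-bound follows from $L_T(\xi)=0$ and \eqref{my399} exactly as you indicate. The only cosmetic difference is the $Z$-estimate, where the paper simply invokes \cite[Corollary 4]{BH2008} for the quadratic BSDE \eqref{myq1075} rather than rederiving the energy estimate by hand.
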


\begin{lemma}\label{myq7902}
	Assume that all the conditions of Theorem \ref{my116} are satisfied. Then, for any $p\geq 1$, we have
	\begin{align*}
	\begin{split}
\Pi(p):=\sup\limits_{\theta\in(0,1)}\lim_{m\rightarrow \infty}\sup\limits_{q\geq 1}\mathbf{{E}}\left[\exp\left\{p\gamma \sup\limits_{s\in[0,T]}\delta_{\theta}\overline{Y}^{(m,q)}_s\right\}\right]<\infty,
	\end{split}
	\end{align*}
	where we use the following notations
	{ \[
\delta_{\theta}Y^{(m,q)}=\frac{\theta Y^{(m+q)}-Y^{m}}{1-\theta}, \ \delta_{\theta}\widetilde{Y}^{(m,q)}=\frac{\theta Y^{(m)}-Y^{(m+q)}}{1-\theta}~ \text{and} ~
\delta_{\theta}\overline{Y}:=|\delta_{\theta} Y^{(m,q)}|+|\delta_{\theta}\widetilde{Y}^{(m,q)}|.
\]
}
\end{lemma}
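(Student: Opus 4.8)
The plan is to reproduce the $\theta$-method computation of the uniqueness proof (Lemma \ref{myq7900}) almost verbatim, the one genuinely new point being that we must carry the iteration index. Since the driver of the $m$-th equation \eqref{myq78} depends on $Y^{(m-1)}$, the estimate at level $m$ will be controlled by the corresponding quantity at level $m-1$; this turns the self-referential (fixed-point) inequality of Lemma \ref{myq7900} into a \emph{recursion in $m$}, and the whole difficulty is to exploit that recursion together with the order of the quantifiers in the statement. Throughout I fix $p\ge1$ and, as in Remark \ref{myrk11}, assume without loss of generality that $f(t,y,v,\cdot)$ is concave.

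Fix $\theta\in(0,1)$ and $q\ge1$. Using the representation of Lemma \ref{my1046} (valid by Remark \ref{my3996}), write $Y^{(m)}_t=y^{(m)}_t+\sup_{t\le s\le T}L_s(y^{(m)}_s)$, where $y^{(m)}$ solves the standard quadratic BSDE with driver $f(s,Y^{(m-1)}_s,\mathbf{P}_{Y^{(m-1)}_s},\cdot)$. I would form $\delta_\theta y^{(m,q)},\delta_\theta\widetilde y^{(m,q)}$ exactly as in \eqref{myq123}; the terminal value is again $-\xi$ and the generator splits into a concavity part and a zero-order part. The concavity part is bounded, as in Lemma \ref{myq7900}, by $-f(s,Y^{(m+q-1)}_s,\mathbf{P}_{Y^{(m+q-1)}_s},-z)\le \alpha_s+\beta(|Y^{(m+q-1)}_s|+\mathbf{E}[|Y^{(m+q-1)}_s|])+\tfrac{\gamma}{2}|z|^2$. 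The decisive divergence from Lemma \ref{myq7900} occurs in the zero-order part: because the two equations carry $Y^{(m-1)}$ and $Y^{(m+q-1)}$, condition (H2'')(2) together with the identity $Y^{(m+q-1)}-Y^{(m-1)}=(1-\theta)(\delta_\theta Y^{(m-1,q)}+Y^{(m+q-1)})$ gives $|\delta_\theta f_0^{(m,q)}(s)|\le \beta(|\delta_\theta Y^{(m-1,q)}_s|+|Y^{(m+q-1)}_s|+\mathbf{E}[|\delta_\theta Y^{(m-1,q)}_s|+|Y^{(m+q-1)}_s|])$, so the $\delta_\theta Y$ entering the driver bound sits at level $m-1$.

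I would then run the argument of Lemma \ref{myq7900} unchanged: apply assertion (ii) of Lemma \ref{my7} (whose integrability hypothesis holds for each fixed $\theta$ by the a priori bounds of Lemma \ref{myq7901}), combine with the Lipschitz bound \eqref{my399} and the Jensen/Hölder steps of \eqref{myq2231}--\eqref{myq22313}, and take expectation. Since the self-referential term is replaced by its level-$(m-1)$ counterpart, the analogue of the final inequality of Lemma \ref{myq7900} becomes, on an interval $[T-h,T]$,
\[
H_m:=\sup_{q\ge1}\mathbf{E}\Big[\exp\Big\{p\gamma\sup_{s\in[T-h,T]}\delta_\theta\overline Y^{(m,q)}_s\Big\}\Big]\ \le\ 4A^{1/2}\,H_{m-1}^{\,(16+32\kappa)\beta h},
\]
where $A:=\sup_{m,q,\theta}\mathbf{E}[\exp\{(16+32\kappa)p\gamma(|\xi|+\widetilde\chi^{(m,q)}+C_2^{(m,q)})\}]$ collects the analogues of $\widetilde\chi,C_2$, which involve only $\int_0^T\alpha_s\,ds$ and suprema of $|Y^{(m)}|,|Y^{(m+q)}|,|y^{(m)}|,|y^{(m+q)}|$; by Lemma \ref{myq7901} these have finite exponential moments of every order uniformly in $m,q,\theta$, so $A<\infty$. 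Choosing $h$ (depending only on $\beta,\kappa$) so that $\rho:=(16+32\kappa)\beta h<1$ both legitimizes the Jensen step $\mathbf{E}[\exp\{\rho p\gamma\sup\delta_\theta\overline Y^{(m-1,q)}\}]\le H_{m-1}^{\rho}$ and produces the displayed recursion.

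The crux is that $x\mapsto 4A^{1/2}x^{\rho}$ with $\rho<1$ is a contraction with a unique, \emph{$\theta$-independent} fixed point $M_*=(4A^{1/2})^{1/(1-\rho)}$, whose iterates converge to $M_*$ from any finite start. For each fixed $\theta<1$ the initial value $H_0$ is finite (as $Y^{(0)}\equiv0$ and $Y^{(q)}\in\mathbb S$), even though $H_0\to\infty$ as $\theta\to1$; passing to $\lim_m$ damps out this $\theta$-dependent blow-up and yields $\limsup_m H_m\le M_*$ uniformly in $\theta$ — which is precisely why the statement places $\lim_m$ inside $\sup_\theta$. Hence $\sup_\theta\lim_m H_m\le M_*<\infty$ on $[T-h,T]$. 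Since $h$ depends only on $\beta,\kappa$, I would finally cover $[0,T]$ by finitely many subintervals of length at most $h$ and iterate the estimate backwards as in Theorem \ref{my916}, the only care being the nonlocal term $\sup_{t\le s\le T}L_s(y^{(m)}_s)$, whose contribution is split at each subinterval endpoint and controlled via \eqref{my399} using the bound already obtained on the later subintervals. The main obstacle is exactly this passage: recognizing that the loss of self-reference yields a level-recursion, checking via Lemma \ref{myq7901} that the inhomogeneous factor $A$ is uniformly finite, and combining the exponent $\rho<1$ with the quantifier order to absorb the divergence of the initial data.
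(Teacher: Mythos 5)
Your proposal is correct and follows essentially the same route as the paper: the same $\theta$-method applied to the Picard iterates, yielding a level-$(m-1)$ recursion rather than a self-referential inequality, with the inhomogeneous factors made uniform in $m,q,\theta$ via Lemma \ref{myq7901}, the $\theta$-dependent initial term damped out by the exponent $((16+32\kappa)\beta h)^{m-1}\to 0$, and the passage from $[T-h,T]$ to $[0,T]$ by splitting into finitely many subintervals. The only cosmetic difference is that you package the iteration as convergence to the fixed point of $x\mapsto 4A^{1/2}x^{\rho}$, whereas the paper unrolls the recursion explicitly down to the level-$1$ term and bounds the accumulated constants by the geometric series.
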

\medskip

 \noindent We are now in a position to complete the proof of the main result.\medskip

\begin{proof}[Proof of Theorem \ref{my116}] It suffices to prove the existence. Note that for any integer $p\geq 1$ and for any $\theta\in (0,1)$,
\begin{align*}
\limsup_{m\rightarrow \infty}\, \sup\limits_{q\geq 1}\mathbf{E}\left[\sup\limits_{t\in[0,T]}
\left|{Y}^{(m+q)}_t-{Y}^{(m)}_t\right|^p\right]\leq 2^{p-1}(1-\theta)^p\left(\frac{\Pi(1)p!}{\gamma^{p}}+\sup\limits_{m\geq 1}\mathbf{{E}}\bigg[\sup\limits_{t\in[0,T]}\big|{Y}^{(m)}_t\big|^p\bigg]\right),
\end{align*}
which together with Lemmas \ref{myq7901}, \ref{myq7902} and the arbitrariness of $\theta$ implies that
\begin{align*}
\limsup_{m\rightarrow \infty}\, \sup\limits_{q\geq 1}\mathbf{E}\left[\sup\limits_{t\in[0,T]}
\big|{Y}^{(m+q)}_t-{Y}^{(m)}_t\big|^p\right]=0, \ \forall p\geq 1.
\end{align*}
Applying It\^o's formula to $\big|{Y}^{(m+q)}_t-{Y}^{(m)}_t\big|^2$  yields
\begin{align*}
\begin{split}
&\mathbf{E}\left[\int^T_0\big|Z^{(m+q)}_t-Z^{(m)}_t\big|^2dt \right]\leq \mathbf{E}\left[\sup\limits_{t\in[0,T]}
\big|{Y}^{(m+q)}_t-{Y}^{(m)}_t\big|^2+
\sup\limits_{t\in[0,T]}
\big|{Y}^{(m+q)}_t-{Y}^{(m)}_t\big|\Delta^{(m,q)}\right]\\
&\leq \mathbf{E}\left[\sup\limits_{t\in[0,T]}
\big|{Y}^{(m+q)}_t-{Y}^{(m)}_t\big|^2\right]+
\mathbf{E}\left[|\Delta^{(m,q)}|^2\right]^{\frac{1}{2}}\mathbf{E}\left[\sup\limits_{t\in[0,T]}
\big|{Y}^{(m+q)}_t-{Y}^{(m)}_t\big|^2\right]^{\frac{1}{2}}
\end{split}
\end{align*}
with
\[
\Delta^{(m,q)}:=\int^T_0\left|f\left(t,Y^{(m+q-1)}_t,\mathbf{P}_{Y^{(m+q-1)}_t},Z^{(m+q)}_t\right)- f\left(t,Y^{(m-1)}_t,\mathbf{P}_{Y^{(m-1)}_t},Z^{(m)}_t\right)\right|dt+|K_T^{(m+q)}|+|K_T^{(m)}|.
\]
It follows from  Lemma \ref{myq7901} and dominated convergence theorem that  
\begin{align*}
\limsup_{m\rightarrow \infty}\, \sup\limits_{q\geq 1} \mathbf{E}\left[\left(\int^T_0\big|Z^{(m+q)}_t-Z^{(m)}_t\big|^2dt\right)^p\right]=0, \ \forall p\geq 1.
\end{align*}
Therefore, there exists a pair of  processes $(Y,Z)\in \mathbb{S}\times\mathcal{H}^d$  such that
\begin{align}\label{myq37}
\lim_{m\rightarrow \infty} \mathbf{E}\left[\sup\limits_{t\in[0,T]}
\big|{Y}^{(m)}_t-{Y}_t\big|^p+\bigg(\int^T_0\big|Z^{(m)}_t-Z_t\big|^2dt\bigg)^p\right]=0,\ \forall p\geq 1.
\end{align}
Set
\[
K_t=Y_t-Y_0+\int_0^tf(s,Y_s,Z_s)\, ds-\int_0^tZ_s\, dB_s.
\]
Using assumption (H2''), we obtain 
\begin{align*}
\lim_{m\rightarrow \infty}\mathbf{E}\left[\int^T_0 \left|f\left(t, Y_t^{(m-1)},\mathbf{P}_{Y^{(m-1)}_t}, Z_t^{(m)}\right)-f\left(t, Y_t,\mathbf{P}_{Y_t}, Z_t\right)\right|dt\right]=0,
\end{align*}
which implies that, as $m\to\infty$, \[\mathbf{E}\left[\sup\limits_{t\in[0,T]}\big|K_t-K_t^{(m)}\big|\right]\rightarrow 0.\]
In particular, we have $K_t=\lim\limits_{m\rightarrow\infty}K_t^{(m)}=\lim\limits_{m\rightarrow\infty}\mathbf{E}\left[K_t^{(m)}\right]=\mathbf{E}[K_t]$ and then $K$ is a deterministic, non-decreasing and continuous process. Finally, it follows from \eqref{my399} that
\[ \lim\limits_{m\rightarrow\infty}\mathbf{E}\left[\sup\limits_{t\in[0,T]}\left|\ell\left(t,Y^{(m)}_t\right)-\ell(t,Y_t)\right|\right]=0,\]
which indicates $\mathbf{E}[\ell(t,Y_t)]\geq 0$.
Moreover, recalling \cite[Lemma 13]{BH}, we have\[
\int_0^T \mathbf{E}[\ell(t,Y_t)]dK_t=\lim\limits_{m\rightarrow\infty} \int_0^T \mathbf{E}[\ell(t,Y^{(m)}_t)]dK^{(m)}_t=0,
\]
 which implies that $(Y,Z,K)\in\mathbb{S}\times\mathcal{H}^d\times\mathcal{A}$ is a deterministic solution to the quadratic mean reflected BSDE \eqref{my1}. The proof is complete.
\end{proof}

\appendix
\renewcommand\thesection{\normalsize Appendix}
\section{ }

\renewcommand\thesection{A}
\normalsize

This appendix is devoted to the proofs of Lemma \ref{myq7901} and Lemma \ref{myq7902}, which we give for the reader's convenience. The main idea is the same as in Lemma \ref{myq7900} or  \cite[Theorem 4.1]{HMW}.

\subsection{Proof of Lemma \ref{myq7901}}

In view of Lemma \ref{my1046} and Remark \ref{my3996}, we have for any $m\geq 1$,
\begin{align}\label{my100}
Y^{(m)}_t:= y^{(m)}_t+\sup\limits_{t\leq s\leq T}L_s\big(y^{(m)}_s\big), \ \ \forall t\in[0,T],
\end{align}
where $y^{(m)}_t$ is the solution to the following quadratic BSDE
 \begin{align}\label{myq1075}
y^{(m)}_t=\xi+\int^{T}_t f\left(s,Y^{(m-1)}_s,\mathbf{P}_{Y^{(m-1)}_s},z^{(m)}_s\right)ds-\int^{T}_t z^{(m)}_s dB_s.
\end{align}

\noindent Applying assertion (i) of Lemma \ref{my7} and \eqref{myq82} yields for any $t\in[0,T]$,
\small
\begin{align}\label{myq523}
	\begin{split}
	\exp\left\{{\gamma}\big|y^{(m)}_t\big|\right\}\leq \mathbf{E}_t\exp\left\{\gamma \left(|\xi|+\int^T_0 \alpha_sds+\beta(T-t)\bigg(\sup\limits_{s\in[t,T]}\big|Y^{(m-1)}_{s}\big|+\sup\limits_{s\in[t,T]}\mathbf{E}\left[\big|Y^{(m-1)}_{s}\big|\right]\bigg)\right)\right\}.
	\end{split}
\end{align}
\normalsize

\noindent Using {Doob's maximal inequality}, we get  for each  $m\geq 1, p\geq 2 $ and $t\in[0,T]$,
\begin{align}\label{myq1076}
	\begin{split}
	&\mathbf{E}\bigg[\exp\bigg\{{p\gamma}\sup\limits_{s\in[t,T]}\big|y^{(m)}_s\big|\bigg\}\bigg]\\
	& \hspace*{0.5cm}\leq 4 \mathbf{E}\left[\exp\bigg\{p\gamma \bigg(|\xi|+\int^T_0 \alpha_sds+\beta(T-t)\bigg(\sup\limits_{s\in[t,T]}|Y^{(m-1)}_{s}|+\sup\limits_{s\in[t,T]}\mathbf{E}\big[\big|Y^{(m-1)}_{s}\big|\big]\bigg)\bigg)\bigg\} \right].
	\end{split}
\end{align}
Recalling \eqref{my100}, we have  \[
\big|Y^{(m)}_t\big|\leq \big|y^{(m)}_t\big|+\sup\limits_{0\leq s\leq T}|L_s(0)|+\kappa\sup\limits_{t\leq s\leq T}\mathbf{E}\big[|y^{(m)}_s|\big].
\]
Set $\widetilde{\alpha}=\sup\limits_{0\leq s\leq T}|L_s(0)|+\int^T_0 \alpha_sds$. Using Jensen's inequality, we get for any $m\geq 1$, $p\geq 2 $ and $t\in[0,T]$,
{\small \begin{align*}	
	\begin{split}
	\mathbf{E}\left[\exp\bigg\{{p\gamma}\sup\limits_{s\in[t,T]}\big|Y^{(m)}_s\big|\bigg\}\right]&\leq e^{p\gamma\sup\limits_{0\leq s\leq T}|L_s(0)|}\,\mathbf{E}\left[\exp\big\{{p\gamma}\sup\limits_{s\in[t,T]}\big|y^{(m)}_s\big|\big\}\right]\mathbf{E}\left[\exp\big\{{\kappa p\gamma}\sup\limits_{s\in[t,T]}\big|y^{(m)}_s\big|\big\}\right]\\
	&\leq  e^{p\gamma\sup\limits_{0\leq s\leq T}|L_s(0)|}\,\mathbf{E}\left[\exp\big\{(2+2\kappa){p\gamma}\sup\limits_{s\in[t,T]}\big|y^{(m)}_s\big|\big\}\right]\\
	&\leq 4 \mathbf{E}\left[\exp\bigg\{(2+2\kappa)p\gamma \bigg(|\xi|+\widetilde{\alpha}+\beta(T-t)\sup\limits_{s\in[t,T]}|Y^{(m-1)}_{s}|\bigg)\bigg\}\right] \\
	&\hspace*{2cm} \times \mathbf{E}\left[\exp\bigg\{(2+2\kappa)p\gamma \beta(T-t)\sup\limits_{s\in[t,T]}|Y^{(m-1)}_{s}|\bigg\}	 \right].
	\end{split}
\end{align*}}

\noindent Choose a constant $h\in (0,T]$   depending only on $\beta$ and $\kappa$ such that
\begin{align}\label{myq7906}
(32+64\kappa)\beta h <1.\end{align}
In view of H\"{o}lder's inequality, we obtain that for any  $p\geq 2$,
 {\small \begin{align}\label{myq1071}
	\begin{split}
 &\mathbf{E}\left[\exp\bigg\{{p\gamma}\sup\limits_{s\in[T-h,T]}\big|Y^{(m)}_s\big|\bigg\}\right]
\\
&\leq 4 \mathbf{E}\left[\exp\bigg\{{(4+4\kappa) p\gamma}(|\xi|+\widetilde{\alpha})\bigg\}\right]^{\frac{1}{2}}\mathbf{E}\left[\exp\bigg\{(4+4\kappa)\beta hp\gamma\sup\limits_{s\in[T-h,T]}\big|Y^{(m-1)}_{s}\big|\bigg\}	\right]\\
&\leq 4\mathbf{E}\left[\exp\bigg\{{(8+8\kappa) p\gamma}|\xi|\bigg\}\right]^{\frac{1}{4}}\mathbf{E}\left[\exp\bigg\{{(8+8\kappa) p\gamma}\widetilde{\alpha}\bigg\}\right]^{\frac{1}{4}}  \mathbf{E}\left[\exp\bigg\{p\gamma\sup\limits_{s\in[T-h,T]}\big|Y^{(m-1)}_{s}\big|\bigg\}	\right]^{(4+4\kappa)\beta h}.
	\end{split}
\end{align}}

\noindent Define $ \rho = \frac{1}{1-(4+4\kappa)\beta h}$ and \begin{align*}
\mu:=
\begin{cases} \frac{T}{h}, \  &\text{if $\frac{T}{h}$ is an integer};\\
 [\frac{T}{h}]+1, \ &\text{otherwise}.
\end{cases}
\end{align*}
If $\mu=1$, it follows from  \eqref{myq1071} that for each $p\geq 2$ and $m\geq 1$,
\begin{align*}
\begin{split}
&\mathbf{E}\bigg[\exp\bigg\{p\gamma\sup\limits_{s\in[0,T]}\big|Y^{(m)}_s\big|\bigg\}
\bigg] \\
&\leq 4\mathbf{E}\bigg[\exp\bigg\{{(8+8\kappa) p\gamma}|\xi|\bigg\}\bigg]^{\frac{1}{4}}\mathbf{E}\bigg[\exp\bigg\{{(8+8\kappa) p\gamma}\widetilde{\alpha}\bigg\}\bigg]^{\frac{1}{4}} \mathbf{E}\bigg[\exp\bigg\{{p\gamma} \sup\limits_{s\in[0,T]}|Y^{(m-1)}_s|\bigg\}	 \bigg]^{(4+4\kappa)\beta h}.
\end{split}
\end{align*}
Iterating the above procedure $m$ times yields, given the definition of $\rho$,
 \begin{align}\label{myq698}
\begin{split}
\mathbf{E}\bigg[\exp\bigg\{{p\gamma}\sup\limits_{s\in[0,T]}\big|Y^{(m)}_s\big|\bigg\}
\bigg] \leq 4^{\rho}\mathbf{E}\bigg[\exp\bigg\{{(8+8\kappa) p\gamma}   |\xi|\bigg\}\bigg]^{\frac{\rho}{4}}\mathbf{E}\bigg[\exp\bigg\{{(8+8\kappa) p\gamma}  \widetilde{\alpha}\bigg\}\bigg]^{\frac{\rho}{4}},
\end{split}
\end{align}

\noindent which is uniformly bounded with respect to $m$ thanks to assumptions (H1'') and (H2''). If $\mu= 2$, proceeding identically as above, we have for any $p\geq 2$,
 \begin{align}\label{myq699}
 \begin{split}
&\mathbf{E}\bigg[\exp\bigg\{{p\gamma}\sup\limits_{s\in[T-h,T]}\big|Y^{(m)}_s\big|\bigg\}
\bigg]\leq 4^{\rho}\mathbf{E}\bigg[\exp\bigg\{{(8+8\kappa)p\gamma}  |\xi|\bigg\}\bigg]^{\frac{\rho}{4}}\mathbf{E}\bigg[\exp\bigg\{{(8+8\kappa) p\gamma}  \widetilde{\alpha}\bigg\}\bigg]^{\frac{\rho}{4}}.
\end{split}
\end{align}
We then consider the following quadratic reflected BSDE on time interval $[0,T-h]$:
\begin{align*}
\begin{cases}
&Y_t^{(m)}=Y_{T-h}^{(m)}+\int_t^{T-h} f(s,Y^{(m-1)}_s,\mathbf{P}_{Y^{(m-1)}_s}, Z^{(m)}_s)ds-\int_t^{T-h} Z^{(m)}_sdB_s+K^{(m)}_{{T-h}}-K^{(m)}_t,\\
& \mathbf{E}[\ell(t,Y^{(m)}_t)]\geq 0,\quad \forall t\in [0,T-h] \,\,\mbox{ and }\,\, \int_0^{T-h} \mathbf{E}[\ell(t,Y^{(m)}_t)] dK^{(m)}_t = 0.
\end{cases}
\end{align*}
According to the derivation of \eqref{myq698}, we deduce that
\begin{align*}
	\begin{split}
	&\mathbf{E}\bigg[\exp\bigg\{{p\gamma}\sup\limits_{s\in[0,T-h]}\big|Y^{(m)}_s\big|\bigg\}
	\bigg]\leq 4^{\rho}\mathbf{E}\bigg[\exp\bigg\{{(8+8\kappa) p\gamma}\big|Y_{T-h}^{(m)}\big|\bigg\}\bigg]^{\frac{\rho}{4}}\mathbf{E}\bigg[\exp\bigg\{{(8+8\kappa) p\gamma}  \widetilde{\alpha}\bigg\}\bigg]^{\frac{\rho}{4}}\\
	\hspace*{1cm}&\leq 4^{\rho+{\frac{\rho^2}{4}}}\mathbf{E}\bigg[\exp\bigg\{({8+8\kappa})^2 p\gamma|\xi|\bigg\}\bigg]^{\frac{\rho^2}{16}}\mathbf{E}\bigg[\exp\bigg\{({8+8\kappa})^2 p\gamma\widetilde{\alpha}\bigg\}\bigg]^{\frac{\rho^2}{16}}\mathbf{E}\bigg[\exp\bigg\{{(8+8\kappa) p\gamma} \widetilde{\alpha}\bigg\}\bigg]^{\frac{\rho}{4}},
	\end{split}
	\end{align*}
where we used \eqref{myq699} in the last inequality.
Putting the above inequalities together and applying H\"{o}lder's inequality again yields for any $p\geq 2$,
{\small \begin{align}\label{myq516}
	\begin{split}
	\mathbf{E}\bigg[\exp\bigg\{{p\gamma}\sup\limits_{s\in[0,T]}\big|Y^{(m)}_s\big|\bigg\}
	\bigg]
	&\leq \mathbf{E}\bigg[\exp\bigg\{{2p\gamma}\sup\limits_{s\in[0,T-h]}|Y^{(m)}_s|\bigg\}
	\bigg]^{\frac{1}{2}}	\mathbf{E}\bigg[\exp\bigg\{{2p\gamma}\sup\limits_{s\in[T-h,T]}|Y^{(m)}_s|\bigg\}
	\bigg]^{\frac{1}{2}}\\
	&\leq 4^{\rho+{\frac{\rho^2}{8}}}\mathbf{E}\bigg[\exp\bigg\{(8+8\kappa)^2 2p\gamma|\xi|\bigg\}\bigg]^{\frac{\rho}{8}+\frac{\rho^2}{32}}\mathbf{E}\bigg[\exp\bigg\{(8+8\kappa)^2 2p\gamma\widetilde{\alpha}\bigg\}\bigg]^{\frac{\rho}{4}+\frac{\rho^2}{32}},
	\end{split}
	\end{align}}which is also uniformly bounded with respect to $m$.

\noindent Iterating the above procedure $\mu$ times in the general case, we get
\begin{align*}
	\begin{split}
\sup\limits_{m\geq 0}\mathbf{E}\bigg[\exp\bigg\{p\gamma\sup\limits_{s\in[0,T]}|Y^{(m)}_s|\bigg\} \bigg]<\infty, \ \forall p\geq 1,
	\end{split}
\end{align*}
which together with \eqref{myq1076} implies that \begin{align}\label{myq10131}
	\begin{split}
\sup\limits_{m\geq 0}\mathbf{E}\bigg[\exp\bigg\{p\gamma\sup\limits_{s\in[0,T]}|y^{(m)}_s|\bigg\} \bigg]<\infty, \ \forall p\geq 1.
	\end{split}
\end{align}
It follows from Lemma \ref{my1046} and assumption (H4) that
\[
\sup\limits_{m\geq 0}K^{(m)}_T\leq \sup\limits_{0\leq s\leq T}|L_s(0)|+\sup\limits_{m\geq 0}\mathbf{E}\bigg[\sup\limits_{s\in[0,T]}\left|y^{(m)}_s\right|\bigg]<\infty.
\]
\noindent Finally, noting $Z^{(m)}=z^{(m)}$ and applying  \cite[Corollary 4]{BH2008} to the quadratic BSDE \eqref{myq1075} leads to
\begin{align*}
	\begin{split}
\sup\limits_{m\geq 0}\mathbf{E}\left[\bigg(\int^T_0\left|Z^{(m)}_t\right|^2dt\bigg)^p \right]<\infty, \ \forall p\geq 1,
	\end{split}
\end{align*}
which ends the proof.
\medskip

\subsection{Proof of Lemma \ref{myq7902}}

Without loss of generality, assume $f(t,y,v,\cdot)$ is concave, since the other case can be proved by a similar analysis, as discussed in Remark \ref{myrk11}.
For each fixed $m,q\geq 1$ and $\theta\in(0,1)$, we can define similarly $\delta_{\theta}\ell^{(m,q)}$, $\delta_{\theta}\widetilde{\ell}^{(m,q)}$ and $\delta_{\theta}\overline{\ell}^{(m,q)}$ for $y,z$.
Then, the pair of processes $(\delta_{\theta}y^{(m,q)},\delta_{\theta}z^{(m,q)})$
satisfies the following BSDE:
{\small \begin{align}\label{myq12}
\begin{split}
\delta_{\theta}y^{(m,q)}_t=&-\xi+\int^T_t\left(\delta_{\theta}f^{(m,q)}\left(s,\delta_{\theta}z^{(m,q)}_s\right)+\delta_{\theta}f_0^{(m,q)}(s)\right)ds-\int^T_t\delta_{\theta}z^{(m,q)}_sdB_s,
\end{split}
\end{align}}
where the  generator is given by
{\footnotesize \begin{align*}
& \delta_{\theta}f_0^{(m,q)}(t)\!=\!
\frac{1}{1-\theta}\left(f\left(t,Y^{(m+q-1)}_{t},\mathbf{P}_{Y^{(m+q-1)}_{t}}, z^{(m)}_t\right)\!-\!f\left(t,Y^{(m-1)}_{t},\mathbf{P}_{Y^{(m-1)}_{t}}, z^{(m)}_t\right)\right),\\
&
\delta_{\theta}f^{(m,q)}(t,z)\!=\!\frac{1}{1-\theta}\left(
\theta f\left(t,Y^{(m+q-1)}_{t},\mathbf{P}_{Y^{(m+q-1)}_{t}}, z^{(m+q)}_t\right) \! - \! f\left(t,Y^{(m+q-1)}_{t},\mathbf{P}_{Y^{(m+q-1)}_{t}}, -(1-\theta)z+\theta z^{(m+q)}_t\right)\right).
\end{align*}}

\noindent From assumption (H2''), we get
\begin{align*}
&\delta_{\theta}f_0^{(m,q)}(t)\leq \beta \left(|Y^{(m+q-1)}_{t}|+|\delta_{\theta}Y^{(m-1,q)}_t|+\mathbf{E}\bigg[\left|Y^{(m+q-1)}_{t}\right|+\left|\delta_{\theta}Y^{(m-1,q)}_t\right|\bigg]\right),\\
&\delta_{\theta}f^{(m,q)}(t,z)
 \leq -f\left(t,Y^{(m+q-1)}_t,\mathbf{P}_{Y^{(m+q-1)}_{t}}, -z\right)
\leq \alpha_t+\beta \left(\left|Y^{(m+q-1)}_t\right|+\mathbf{E}\bigg[\left|Y^{(m+q-1)}_{t}\right|\bigg]\right)+\frac{\gamma}{2}|z|^2.
\end{align*}
 Set  $ C_3:=2\sup\limits_{m}\mathbf{E}\big[\sup\limits_{s\in[0,T]}|Y^{(m)}_{s}|\big]<\infty$ (see Lemma \ref{myq7901})
and for any $m,q\geq 1$, denote
\begin{align*}
&\zeta^{(m,q)}=|\xi|+\beta T C_3+\int^T_0\alpha_sds+\beta T\left(\sup\limits_{s\in[0,T]}\left|Y^{(m-1)}_{s}\right|+\sup\limits_{s\in[0,T]}\left|Y^{(m+q-1)}_{s}\right|\right),\\
& \chi^{(m,q)}=2\beta T C_3+\int^T_0\alpha_sds+2 \beta T \left(\sup\limits_{s\in[0,T]}\left|Y^{(m+q-1)}_{s}\right|+\sup\limits_{s\in[0,T]}\left|Y^{(m-1)}_{s}\right|\right).\end{align*}

\noindent Applying assertion (ii) of Lemma \ref{my7} to  \eqref{myq12} yields for any $p\geq 1$,
{\small  \begin{align*}	
 \begin{split}
\exp\left\{{p\gamma}\big(\delta_{\theta}y^{(m,q)}_t\big)^+\right\} &\leq   \mathbf{E}_t\exp\bigg\{p\gamma \bigg(|\xi|+\chi^{(m,q)}+\beta (T-t)\bigg(\sup\limits_{s\in[t,T]}|\delta_{\theta}Y^{(m-1,q)}_{s}|+\sup\limits_{s\in[t,T]}\mathbf{E}[|\delta_{\theta}Y^{(m-1,q)}_{s}|]
 \bigg)\bigg)\bigg\}
 \end{split}
	\end{align*}
}

\noindent and in a similar way, we also have 
\begin{align*}	
 \begin{split}
 &\exp\left\{{p\gamma}\big(\delta_{\theta}\widetilde{y}^{(m,q)}_t\big)^+\right\}\\
 &\hspace*{1cm}\leq  \mathbf{E}_t\left[\exp\bigg\{p\gamma \bigg(|\xi|+\chi^{(m,q)}+\beta (T-t)\bigg(\sup\limits_{s\in[t,T]}|\delta_{\theta}\widetilde{Y}^{(m-1,q)}_{s}|
 +\sup\limits_{s\in[t,T]}\mathbf{E}[|\delta_{\theta}\widetilde{Y}^{(m-1,q)}_{s}|]
 \bigg)\bigg)\bigg)\bigg\}\right].
 \end{split}
\end{align*}

\noindent According to  the fact that
\begin{align*}
\big(\delta_{\theta}{y}^{(m,q)}\big)^-
\leq \big(\delta_{\theta}\widetilde{y}^{(m,q)}\big)^++2|y^{(m)}| \ \text{and}\ \big(\delta_{\theta}\widetilde{y}^{(m,q)}\big)^-
\leq \big(\delta_{\theta}{y}^{(m,q)}\big)^++2|y^{(m+q)}|,
\end{align*}
we derive, using H\"{o}lder's inequality and \eqref{myq523}, that
\begin{align*}
	\begin{split}
	&\exp\left\{p\gamma \big|\delta_{\theta}{y}^{(m,q)}_t\big|\right\}\vee \exp\left\{p\gamma \big|\delta_{\theta}\widetilde{y}^{(m,q)}_t\big|\right\}\\
	&\hspace*{1cm}\leq
	\exp\left\{{p\gamma}\bigg(\bigg( \delta_{\theta}{y}^{(m,q)}_t\bigg)^++\bigg(\delta_{\theta}\widetilde{y}^{(m,q)}_t\bigg)^++2\left|y^{(m)}_t\right|+2\left|y^{(m+q)}_t\right|\bigg)\right \}\\
	&\hspace*{1cm}\leq \mathbf{E}_t\left[\exp\bigg\{p\gamma \bigg(|\xi|+\chi^{(m,q)}+\beta (T-t)\bigg(\sup\limits_{s\in[t,T]}\delta_{\theta}\overline{Y}^{(m-1,q)}_{s}
    +\sup\limits_{s\in[t,T]}\mathbf{E}\left[\delta_{\theta}\overline{Y}^{(m-1,q)}_{s}\right]\bigg)\bigg)\bigg\}	\right]^2\\
    &\hspace{8cm} \times\exp\left\{{2p\gamma}\bigg(\left|y^{(m)}_t\right|+\left|y^{(m+q)}_t\right|\bigg)\right \} \\
    &\hspace*{1cm}\leq \mathbf{E}_t\bigg[\exp\bigg\{p\gamma \bigg(|\xi|+\chi^{(m,q)}+\beta (T-t)\bigg(\sup\limits_{s\in[t,T]}\delta_{\theta}\overline{Y}^{(m-1,q)}_{s}
    +\sup\limits_{s\in[t,T]}\mathbf{E}\left[\delta_{\theta}\overline{Y}^{(m-1,q)}_{s}\right]\bigg)\bigg)\bigg\}\bigg]^2\\
    & \hspace*{8cm} \times \mathbf{E}_t\left[\exp\bigg\{4p\gamma\zeta^{(m,q)} \bigg\}	\right].
	\end{split}
\end{align*}

\noindent In view of {Doob's maximal inequality} and
H\"{o}lder's inequality, we obtain that for all $p> 1$ and $t\in[0,T]$
\begin{align*}
	\begin{split}
	&\mathbf{{E}}\left[\exp\bigg\{p\gamma \sup\limits_{s\in[t,T]}\delta_{\theta}\overline{y}^{(m,q)}_s\bigg\}\right]
	\\ &\leq 4 \mathbf{E}\left[\exp\bigg\{8p\gamma \bigg(|\xi|+{\chi}^{(m,q)} +\beta (T-t)\bigg(\sup\limits_{s\in[t,T]}\delta_{\theta}\overline{Y}^{(m-1,q)}_{s}
    +\sup\limits_{s\in[t,T]}\mathbf{E}\left[\delta_{\theta}\overline{Y}^{(m-1,q)}_{s}\right]\bigg) \bigg)\bigg\}\right]^{\frac{1}{2}} \\
    &\hspace*{3cm} \times \mathbf{E}\left[\exp\bigg\{16p\gamma \zeta^{(m,q)} \bigg\} \right]^{\frac{1}{2}}\\ 
    &\leq  4  \mathbf{E}\left[\exp\bigg\{8p\gamma \bigg(|\xi|+{\chi}^{(m,q)} +\beta (T-t)\sup\limits_{s\in[t,T]}\delta_{\theta}\overline{Y}^{(m-1,q)}_{s}
    \bigg)\bigg\}	\right]^{\frac{1}{2}} \\
    &\hspace*{3cm} \times\mathbf{E}\left[\exp\bigg\{8\beta(T-t) p\gamma  \sup\limits_{s\in[t,T]}\delta_{\theta}\overline{Y}^{(m-1,q)}_{s} \bigg)\bigg\}	\right]^{\frac{1}{2}}  \mathbf{E}\left[\exp\bigg\{16p\gamma \zeta^{(m,q)} \bigg\}	\right]^{\frac{1}{2}}.
\end{split}
\end{align*}

\noindent Set $ C_4:=\sup\limits_{0\leq s\leq T}|L_s(0)|+2\kappa\sup\limits_{m}\mathbf{E}\big[\sup\limits_{s\in[0,T]}|y^{(m)}_{s}|\big]<\infty$ (see \eqref{myq10131}). 
Recalling \eqref{my100} and assumption (H4),
\begin{align*}
\delta_{\theta}\overline{Y}^{(m,q)}_t\leq \delta_{\theta}\overline{y}^{(m,q)}_t+2\kappa\sup\limits_{t\leq s\leq T}\mathbf{E}\left[\delta_{\theta}\overline{y}^{(m,q)}_t\right]+2C_4,
\end{align*}
which together with  Jensen's inequality implies that for each  $p\geq 1$ and $t\in[0,T]$
\begin{align*}
	\begin{split}
        &\mathbf{E}\left[\exp\bigg\{p\gamma \sup\limits_{s\in[t,T]}\delta_{\theta}\overline{Y}^{(m,q)}_s\bigg\}\right]\leq  e^{2p\gamma C_4}\mathbf{E}\left[\exp\bigg\{(2+4\kappa)p\gamma \sup\limits_{s\in[t,T]}\delta_{\theta}\overline{y}^{(m,q)}_s\bigg\}\right]\\ &\leq  4  \mathbf{E}\left[\exp\bigg\{(16+32\kappa)p\gamma \bigg(|\xi|+{\chi}^{(m,q)}+C_4 +\beta (T-t)\sup\limits_{s\in[t,T]}\delta_{\theta}\overline{Y}^{(m-1,q)}_{s}
        \bigg)\bigg\}	\right]^{\frac{1}{2}} \\
        &\hspace*{1cm} \times\mathbf{E}\left[\exp\bigg\{(16+32\kappa)\beta(T-t) p\gamma  \sup\limits_{s\in[t,T]}\delta_{\theta}\overline{Y}^{(m-1,q)}_{s} \bigg)\bigg\}	\right]^{\frac{1}{2}}  \mathbf{E}\left[\exp\bigg\{(32+64\kappa)p\gamma \zeta^{(m,q)} \bigg\}	\right]^{\frac{1}{2}}.
	\end{split}
\end{align*}

\noindent Choosing $h$ as in \eqref{myq7906}, we have
{ \small \begin{align}	\label{myq512}
	\begin{split}
	&\mathbf{{E}}\left[\exp\bigg\{p\gamma \sup\limits_{s\in[T-h,T]}\delta_{\theta}\overline{Y}^{(m,q)}_s\bigg\}\right]
	\\ &\leq 4\mathbf{{E}}\left[\exp\bigg\{{(64+128\kappa) p\gamma}|\xi|\bigg\}		\right]^{\frac{1}{8}}\mathbf{{E}}\left[\exp\bigg\{(64+128\kappa) p\gamma\big({\chi}^{(m,q)}+C_4\big)\bigg\}		\right]^{\frac{1}{8}}\\ & \ \ \ \ \ \ \ \ \ \  \times \mathbf{E}\left[\exp\bigg\{(32+64\kappa) p\gamma \zeta^{(m,q)} \bigg\}	 \right]^{\frac{1}{2}}
	\mathbf{{E}}\left[\exp\bigg\{p\gamma\sup\limits_{s\in[T-h,T]}\delta_{\theta}\overline{Y}^{(m-1,q)}_{s}\bigg\}		\right]^{(16+32\kappa)\beta h}.
	\end{split}
	\end{align}
}

\noindent Set $ \widetilde{\rho} = \frac{1}{1-(16+32\kappa)\beta h}$. If $\mu=1$, it follows from \eqref{myq512} that for each $p\geq 1$ and $m,q\geq 1$,
\begin{align*}
\begin{split}
    &\mathbf{E}\left[\exp\bigg\{p\gamma\sup\limits_{s\in[0,T]}\delta_{\theta}\overline{Y}^{(m,q)}_s\bigg\}\right]\\
    &\leq 4^{\textcolor{black}{\widetilde{\rho}}}\mathbf{{E}}\left[\exp\bigg\{(64+128\kappa) p\gamma|\xi|\bigg\}		\right]^{\frac{\widetilde{\rho}}{8}}\sup\limits_{m,q\geq 1}\mathbf{{E}}\left[\exp\bigg\{(64+128\kappa) p\gamma\big({\chi}^{(m,q)}+C_4\big)\bigg\}		\right]^{\frac{\widetilde{\rho}}{8}}\\
	&\hspace*{1cm}  \times\sup\limits_{m,q\geq 1}\mathbf{E}\left[\exp\bigg\{(32+64\kappa)p\gamma \zeta^{(m,q)} \bigg\}	\right]^{\frac{\widetilde{\rho}}{2}} \mathbf{{E}}\left[\exp\bigg\{p\gamma\sup\limits_{s\in[0,T]}\delta_{\theta}\overline{Y}^{(1,q)}_{s}\bigg\}		\right]^{(16\beta h+32\kappa\beta h)^{m-1}}.
\end{split}
\end{align*}

\noindent The result from Lemma \ref{myq7901} insures that for any $\theta\in(0,1)$
\[
\lim_{m\rightarrow \infty}\sup\limits_{q\geq 1}\mathbf{{E}}\left[\exp\bigg\{p\gamma\sup\limits_{s\in[0,T]}\delta_{\theta}\overline{Y}^{(1,q)}_{s}\bigg\}		\right]^{(16\beta h+32\kappa\beta h)^{m-1}}=1,
\]
which implies that
\begin{align*}
    &\sup\limits_{\theta\in(0,1)}\lim_{m\rightarrow \infty}\sup\limits_{q\geq 1}\mathbf{E}\left[\exp\big\{p\gamma\sup\limits_{s\in[0,T]}\delta_{\theta}\overline{Y}^{(m,q)}_s\big\}\right] \\
    &\hspace*{1cm}\leq  4^{\widetilde{\rho}}\mathbf{{E}}\left[\exp\bigg\{(64+128\kappa) p\gamma|\xi|\bigg\}		\right]^{\frac{\widetilde{\rho}}{8}}\sup\limits_{m,q\geq 1}\mathbf{{E}}\left[\exp\bigg\{(64+128\kappa) p\gamma\big({\chi}^{(m,q)}+C_4\big)\bigg\}\right]^{\frac{\widetilde{\rho}}{8}}\\
	&\hspace*{1cm} \times\sup\limits_{m,q\geq 1}\mathbf{E}\left[\exp\bigg\{(32+64\kappa)p\gamma \zeta^{(m,q)} \bigg\}	\right]^{\frac{\widetilde{\rho}}{2}}<\infty.
\end{align*}
If $\mu=2$, in view of the derivation of \eqref{myq516}, we conclude that for any $p\geq 1$,
 \begin{align*}
	\begin{split}
	&\mathbf{E}\left[\exp\big\{{p\gamma}\sup\limits_{s\in[0,T]}\delta_{\theta}\overline{Y}^{(m,q)}_s\big\}
	\right]\\
	&\leq 4^{\widetilde{\rho}+{\frac{\widetilde{\rho}^2}{16}}}\mathbf{E}\left[\exp\bigg\{(64+128\kappa)^2 2p\gamma|\xi|\bigg\}		\right]^{{\frac{\widetilde{\rho}}{16}+\frac{\widetilde{\rho}^2}{128}}}\sup\limits_{m,q\geq 1}\mathbf{{E}}\left[\exp\bigg\{(64+128\kappa)^2 2p\gamma\big({\chi}^{(m,q)}+C_4\big)\bigg\}		\right]^{{\frac{\widetilde{\rho}}{8}+\frac{\widetilde{\rho}^2}{128}}}\\
	&\hspace*{1cm} \times\sup\limits_{m,q\geq 1}\mathbf{E}\left[\exp\bigg\{(32+64\kappa)(64+128\kappa)2p\gamma \zeta^{(m,q)} \bigg\}	\right]^{{\frac{\widetilde{\rho}}{2}+\frac{\widetilde{\rho}^2}{32}}}\\
	&\hspace*{1cm}  \times \mathbf{{E}}\left[\exp\bigg\{(64+128\kappa)2p\gamma\sup\limits_{s\in[0,T]}\delta_{\theta}\overline{Y}^{(1,q)}_{s}\bigg\}		\right]^{({\frac{1}{2}+\frac{\widetilde{\rho}}{16})}(16\beta h+32\kappa\beta h)^{m-1}},
	\end{split}
	\end{align*}
which also implies the desired assertion when $\mu=2$. Iterating the above procedure $\mu$ times in the general case, we complete the proof.


\begin{thebibliography}{99}

\bibitem{BD} Briand, P., Delyon, B., Hu, Y., Pardoux, E. and Stoica, L. (2003) $L^p$-solutions of backward stochastic differential equations. Stochastic Process. Appl., 108(1), 
109-129. 

\bibitem{BE0}  Barrieu, P. and  El Karoui, N. (2013) Monotone stability of quadratic semimartingales with applications to unbounded general quadratic BSDEs.
Ann. Probab., 41(3B), 1831-1863.



\bibitem{BE} Bouchard, B., Elie, R. and R\'{e}veillac, A. (2015) BSDEs with weak terminal condition. Ann. Probab., 43(2), 572-604, 2015.

\bibitem{BC1} Briand, P., Cardaliaguet, P., Chaudru de Raynal, P.{\'E}. and Hu, Y. (2020) Forward
and backward stochastic differential equations with normal constraints in law. {Stochastic Process. Appl.},  130(12), 7021-7097.

\bibitem{BC} Briand, P., Chaudru de Raynal, P.{\'E}.,  Guillin, A. and  Labart, C. (2020) Particles Systems and Numerical Schemes for Mean Reflected Stochastic Differential Equations.
Ann. Appl. Probab., 30(4), 1884-1909.

  \bibitem{BE1} Briand, P. and Elie, R. (2013) A simple constructive approach to quadratic BSDEs with or without delay.
	Stochastic Process. Appl., 123(8), 2921-2939.


\bibitem{BH} Briand, P., Elie, R. and Hu, Y.  (2018)   BSDEs with mean reflection. Ann. Appl. Probab., 28(1), 482-510.

\bibitem{BH2006}  Briand, P. and Hu, Y.   (2006) BSDE with Quadratic Growth and Unbounded Terminal Value. Probability Theory and Related Fields, 136, 604-618.

\bibitem{BH2008}  Briand, P. and Hu, Y.  (2008) Quadratic BSDEs with convex generators and unbounded terminal conditions.  Probability Theory and Related Fields, 141, 543-567.





\bibitem{DES} Djehiche, B., Elie, R.  and   Hamad{\`e}ne, S. (2021) Mean-field reflected backward stochastic differential equations. Ann. Appl. Probab., in press.



\bibitem{FHT2} Fan, S., Hu, Y. and Tang, S. (2020) Multi-dimensional backward stochastic differential equations of
diagonally quadratic generators: the general result, in arxiv:2007.04481.




\bibitem{HH1} Hibon, H., Hu, Y., Lin, Y.,  Luo, P. and Wang, F. (2018) Quadratic BSDEs with mean reflection. Math. Control Relat. Fields, 8(3-4), 721-738.


\bibitem{HH2} Hibon, H., Hu, Y. and Tang, S. (2018) Mean-field type Quadratic BSDEs, forthcoming in {Numer. Algebra Control Optim.,} arxiv:1708.08784.

\bibitem{HMW} Hu, Y.,  Moreau, R. and Wang, F. (2022) Quadratic mean-field reflected BSDEs. Probability, Uncertainty and Quantitative Risk, 7(3), 169-194. 




\bibitem{K} Kazamaki, N.  Continuous exponential martingales and BMO. Lecture Notes in
Mathematics, 1579. Springer-Verlag, Berlin, 1994.

\bibitem{K1}Kobylanski, M.  (2000) Backward stochastic differential equations and partial differential
equations with quadratic growth. Ann. Probab., 28(2), 558-602.




\bibitem{zhang2017}  Zhang, J. Backward stochastic differential equations. From linear to fully nonlinear theory, vol.~86 of Probability Theory and Stochastic Modelling. Springer, New York, 2017.
\end{thebibliography}
\end{document}